\documentclass[letterpaper, 10 pt, conference]{ieeeconf}
\IEEEoverridecommandlockouts 
\overrideIEEEmargins
\usepackage{hyphenat}
\usepackage{longtable}
\usepackage{lettrine}
\usepackage{algorithm}
\usepackage{amsmath}
\usepackage{amssymb}
\usepackage[utf8]{inputenc}
\inputencoding{latin1}
\usepackage{array}
\usepackage[mathscr]{euscript}
\usepackage{float}
\usepackage{cancel}
\usepackage{mdframed}
\usepackage{pifont}
\usepackage{epigraph}
\usepackage[most]{tcolorbox}
\usepackage{wrapfig}
\usepackage{hyperref}
\usepackage{color}
\usepackage{lipsum}
\usepackage{graphicx}      
\usepackage{arydshln}

\definecolor{myred}{RGB}{51,0,0}
\definecolor{myblue}{RGB}{0,34,142}





\newtheorem{Theorem}{\textbf{Theorem}}
\newtheorem{Lemma}{\textbf{Lemma}}

\newtheorem{Definition}{\textbf{Definition}}

\newtheorem{Example}{\textbf{Example}}



\newtheorem{Remark}{\textbf{Remark}}
\newtheorem{Notation}{\textbf{Notation}}

\newcommand{\NX}{\nLPV_\mathrm{x}}
\newcommand{\NY}{\nLPV_\mathrm{y}}
\newcommand{\NU}{\nLPV_\mathrm{u}}
\newcommand{\NP}{\nLPV_\mathrm{p}}
\newcommand{\NPSI}{\nLPV_\mathrm{\Psi}}

\definecolor{orange}{rgb}{0.1,0.5,1}
\definecolor{Lightgrey}{rgb}{0.95,0.95,0.95}
\definecolor{VeryLightgrey}{rgb}{0.98,0.98,0.98}

\newcommand{\Diag}{\mathrm{diag}}





\newcommand{\LFTA}{\mathsf{A}}
\newcommand{\LFTB}{\mathsf{B}}
\newcommand{\LFTC}{\mathsf{C}}
\newcommand{\LFTD}{\mathsf{D}}

\newcommand{\LFTM}{\mathcal{M}}

\newcommand{\LFTMatrix}{\mathsf{M}}

\newcommand{\ie}{\emph{i.e.}}

\newcommand{\nLFT}{\mathrm{r}}

\newcommand{\LPVA}{\mathscr{A}}
\newcommand{\LPVB}{\mathscr{B}}
\newcommand{\LPVC}{\mathscr{C}}
\newcommand{\LPVD}{\mathscr{D}}

\newcommand{\LPVS}{\mathscr{S}}
\newcommand{\LPVP}{\mathscr{P}}
\newcommand{\LPVX}{\mathscr{X}}

\newcommand{\nLPV}{{n}}

\newcommand{\I}{\mathrm{I}}

\newcommand{\In}[1]{{\mathrm{I}}_{#1}}

\newcommand{\inrange}[2]{\mathbb{I}_{#1}^{#2}}

\title{
   On the equivalence between functionally affine LPV  state-space representations and LFT models.
}
\author{Mih\'aly Petreczky$^{1}$ and Ziad Alkhoury$^{2}$  and Guillaume Merc\`ere$^{3}$
\thanks{$^{1}$ Univ. Lille, CNRS, Centrale Lille, UMR 9189 CRIStAL, F-59000 Lille, France
 {\tt\small mihaly.petreczky@centralelille.fr}}%
\thanks{$^{2}$ Autoliv Electronics, Cergy, France {\tt\small ziad.alkhoury@autoliv.com}}
\thanks{$^{3}$ Universit\'e de Poitiers, ISAE-ENSMA, LIAS, Poitiers, France,
   {\tt\small guillaume.mercere@univ-poitiers.fr}}%
}

\begin{document}

\maketitle
\thispagestyle{empty}

\begin{abstract}
\textcolor{red}{We} propose a transformation algorithm for a class of Linear Parameter-Varying (LPV) systems with 
functional affine dependence on parameters, where the system
 matrices depend affinely on nonlinear functions of the scheduling variable, 
 into Linear Fractional Transformation (LFT) systems. The transformation preserves input-output behavior and minimality,
 \textcolor{red}{and the uncertainity block of the resulting LFT system is linear in the scheduling variables of the LPV system.}
\end{abstract}

\section{Introduction}
Linear Fractional Transformation (LFT) \cite{Zhou1996} and Linear Parameter-Varying (LPV) 
systems \cite{Briat2015,Toth2010SpringerBook,Rugh2000} are crucial 
for handling uncertainties and nonlinearities.
LFT effectively represents uncertain systems, while LPV models systems with time-varying parameters. 
Transforming LPV systems into LFT form, where the uncertainty block depends on the scheduling signal, is 
common \cite{Briat2015,Scherer2009}. 
\textcolor{red}{For LPV systems with affine dependence on the scheduling signal, this transformation is straightforward, but for the general case, it becomes more complex.}

We propose \textcolor{red}{an algorithm for transforming} a class of \textcolor{red}{discrete-time} LPV systems with a 
\emph{functional affine dependence on the parameters} (FALPV), where the matrices depend affinely on
 \textcolor{red}{\emph{recognizable} nonlinear functions of the scheduling variable, to LFT systems.
 Recognizable functions are functions which can be represented by LFT models, and they include polynomial and, after scaling, rational functions.}
    \textcolor{red}{The uncertainty
    block of the resulting LFT system is linear in the scheduling variables of the FALPV and has the same number of uncertainty blocks 
    as the number of scheduling variables.
     The transformation preserves input-output behavior and minimality. }
     \textcolor{red}{In particular, the LFTs arising from two minimal and input-output equivalent FALPVs are isomorphic.}

\textbf{Motivation}
\textcolor{red}{The proposed transformation is useful, because}
the resulting uncertainty block being linear in the scheduling variable simplifies controller synthesis and reduces conservatism. 
While FALPVs can be interpreted as ALPVs after a non-linear transformation of the scheduling variable, 
this approach results in an LFT with a non-linear dependence on the scheduling variable and possibly more uncertainty operators than the 
number of scheduling variables. 
\textcolor{red}{For instance, for LPVs with a polynomial dependence the uncertainity block would contain
monomials of the scheduling variables.}
This makes it harder to bound the uncertainty block and increases the conservatism of the controller synthesis.
Preservation of minimality and isomorphism 
makes controller synthesis independent of the modeling approach. 
This is significant as LPV models
often originate from system identification \cite{Toth2010SpringerBook,CoxTothSubspace}, and
different identification methods may yield different LPV models that are approximately input-output equivalent and minimal. 
However, naive transformations of \textcolor{red}{minimal LPV systems to LFTs can result in non-isomorphic LFTs,}
 leading to different achievable closed-loop performances \cite{Alkhoury2016}. 
 Our results imply that LFTs derived from input-output equivalent minimal LPV systems are
 minimal, input-output equivalent \textcolor{red}{and isomorphic.}
This \textcolor{red}{suggests} that there is  no advantage in identifying LFTs directly \cite{CL08, Vizer2013b} versus LPV models.
\textcolor{red}{In this paper we concentrate on discrete-time LPV systems due to their simplicity and relevance for system identification.
 We conjecture that the results could be extended to the continuous-time case.}

\textbf{Related work}
 The idea of transforming LPV systems to LFT for control design is a standard one \cite{Briat2015,Rugh2000,Scherer2009}, the structural properties of this transformation, 
such as the preservation of minimality and input-output equivalence were not investigated before for FALPV systems.
In deriving the results  we use realization theory of ALPV systems 
\cite{Petreczky2016}, and \textcolor{red}{of} 
 LFTs (viewed as multidimensional systems) \cite{Beck1999,Ball2008}.
 \textcolor{red}{For LPV  systems with affine dependence,}  the preservation of minimality and input-output equivalence for all uncertainty blocks
 was investigated in \cite{Alkhoury2016}. The results of this paper are extension of \cite{Alkhoury2016}
 to FALPV case.

\section{Definitions and problem formulation} \label{sec:definitions}

\textbf{Notation} 
 $\mathbb{N}$ denotes the set of all natural numbers,
  $t$ denotes the discrete-time instant, \ie, $t \in \mathbb{N}$ as well. 
  We denote by $\inrange{i}{j}$ the set $\{i, i+1 ,\ldots, j-1, j\}$, where $i,j \in \mathbb{N}, j>i$. 
  The $n \times n$ identity matrix is denoted by $\In{n}$.
  \textcolor{red}{Let $O_{k,l}$ be  the $k \times l$ zero matrix.}
   When its dimension is clear from the context, the identity matrix is denoted by $\I$,
   and the zero matrix by $0$.

   For a  set $X$ denote by $X^{\mathbb{N}}$ the set of functions
   from $\mathbb{N}$ to $X$. Denote by $l_2(\mathbb{R}^{n})$ the space square summable sequences from $\mathbb{R}^{n}$, and
   by $\mathcal{L}(l_2(\mathbb{R}^{n}))$
   the space of all bounded linear operators on $l_2(\mathbb{R}^{n})$.
  If $A,B$ linear operators, then $AB$ denotes their composition.
  If $s \in (\mathbb{R}^m)^{\mathbb{N}}$, then \textcolor{red}{the $i$th component $s_i \in (\mathbb{R})^{\mathbb{N}}$, $i \in \mathbb{I}_1^d$ 
  of $s$} is such that $s(t)=(s_1(t),\ldots,s_{m}(t))^T$, $t \in \mathbb{N}$.
  If $A$ is an $n \times m$ matrix, $(i,j)$th entry $A_{i,j}$ of which is a linear operator 
  from a subspace $Z \subseteq (\mathbb{R})^{\mathbb{N}}$ to  $(\mathbb{R})^{\mathbb{N}}$, then 
  $A$ is \textcolor{red}{an} operator from  $Z^m$ to  $(\mathbb{R}^p)^{\mathbb{N}}$, 
  where $Z^m$ is the subspace of all $s \in (\mathbb{R}^m)^{\mathbb{N}}$ such that the components $s_i$ of $s$ belong to $Z$, 
  $i \in \mathbb{I}_1^m$, and the $l$th component of $As$ is $\sum_{j=1}^m A_{l,j}s_j$.
  Thus, $\delta \I_m$ denotes the $m \times m$ \textcolor{red}{diagonal} 
  matrix of operators $\Diag(\delta,\ldots,\delta)$.

  For a finite set $\mathcal{X}$ let
  $\mathcal{X}^*$ be the set of all finite sequences  of elements of $\mathcal{X}$. 
  An element $w \in \mathcal{X}^*$ of length $|w|=n \in \mathbb{N}$, 
  is a sequence of the form $x_1 x_2 \dots x_n$, where $x_i \in \mathcal{X}, \forall i=1,2,...,n$. 
  Denote $\epsilon$ the empty sequence where $|\epsilon|=0$, and denote by $vw$ the concatenation of two
  sequence $w,v \in \mathcal{X}^{*}$.
  Thus, $(\mathbb{I}_i^j)^{*}$ denotes the set of all finite sequences  of $\inrange{i}{j}$. 



\subsection{Functional-Affine LPV (FALPV) Models}\label{sec:ALPV}
\begin{color}{red}
Let $\mathbb{P} \subseteq \mathbb{R}^{\NP}$  be the set of scheduling parameters, 
and let us consider a \emph{known function}
$\psi:\mathbb{P} \rightarrow \mathbb{R}^{\NPSI}$. Let us denote by $\psi_i$ the $i$th component of $\psi$, i.e.,
$\psi(\mathbf{p})=[\psi_1(\mathbf{p}),\ldots,\psi_{\NPSI}(\mathbf{p})]^{\top}$, $\forall \mathbf{p} \in \mathbb{P}$.
\end{color}
A \textcolor{red}{Functional Affine Linear Parameter-Varying (FALPV) model $\Sigma$ w.r.t $\psi$
  is a system}
\begin{equation}
\label{equ:FALPVSystem}
\Sigma\left\{
\begin{array}{lcl}
x(t+1) &=&  \LPVA(p(t)) x(t) + \LPVB(p(t)) u(t) , \\
y(t) &=& \LPVC(p(t)) x(t) + \LPVD(p(t)) u(t).
\end{array}
\right. 
\end{equation}
where $x(t) \in \mathbb{X}=\mathbb{R}^{n_\mathrm{x}}$ is the state at time $t$, $y(t) \in \mathbb{Y}=\mathbb{R}^{n_\mathrm{y}}$ 
\textcolor{red}{is} the output at time $t$, $u(t) \in \mathbb{U} = \mathbb{R}^{n_\mathrm{u}}$ represents the input at time $t$, while
 $p(t) \in \mathbb{P} \subseteq \mathbb{R}^{n_\mathrm{p}}$ is the scheduling signal at time $t$.
 The matrix \textcolor{red}{valued} functions $\LPVA$, $\LPVB$, $\LPVC$ and $\LPVD$ are assumed to be affine functions of \textcolor{red}{$\psi(p(t))$}.
\begin{equation*}
	\begin{split}
	 & \begin{bmatrix} \LPVA(p(t))\!\!\! &\!\!\! \LPVB(p(t)) \\ 
    \LPVC(p(t))\!\!\! &\!\!\! \LPVD(p(t)) \end{bmatrix} \!\!=\!\!
     \begin{bmatrix} \LPVA_0\!\! &\!\! \LPVB_0 \\ \LPVC_0\!\! &\!\! \LPVD_0 \end{bmatrix} + 
     \sum\limits_{i=1}^{\textcolor{red}{\NPSI}} \begin{bmatrix} \LPVA_i\!\! &\!\! \LPVB_i \\ \LPVC_i\!\! &\!\! \LPVD_i \end{bmatrix} \psi_i(p(t)) \\
  \end{split}
\end{equation*}
for constant matrices 
$\LPVA_i \in \mathbb{R}^{n_{\mathrm x} \times n_{\mathrm x}}$, $\LPVB_i  \in \mathbb{R}^{n_{\mathrm x} \times n_{\mathrm u}}$,
$\LPVC_i \in \mathbb{R}^{n_{\mathrm y} \times n_{\mathrm x}}$, 
$\LPVD_i \in \mathbb{R}^{n_{\mathrm y} \times n_{\mathrm u}}$, $i \in \mathbb{I}_1^{\NPSI}$, 
Without loss of generality we assume that $\mathbb{P}=[-1,1]^{\NP}$, and
the \textcolor{red}{\emph{functions $1,\psi_1, \ldots, \psi_{\NPSI}$ are linearly independent},
\textcolor{red}{i.e. no $\psi_i$ can be expressed as a linear combination of $\{1\} \cup \{\psi_j\}_{j=1, i \ne j}^{\NPSI}$, $i \in \inrange{1}{\NPSI}$}.}
\textcolor{red}{If the functions $\{1\} \cup \{\psi_i\}_{i=1}^{\NP}$} are not \textcolor{red}{linearly} independent, we can choose a linear basis from them
and express the system matrices using only this \textcolor{red}{linear} basis. The scheduling variables are usually assumed 
to be bounded, and we can always scale them to $[-1,1]^{\NP}$.

\textcolor{red}{Note that $\psi$ is not necessarily a polynomial/affine function. In principle, for any LPV system when can choose
a function $\psi$  (formed by the entries of the matrices of the LPV) for which we can rewrite that LPV as an FALPV.
}

 The \emph{dimension of $\Sigma$}, denoted by \emph{$\dim(\Sigma)$} 
is the dimension $\NX$ of its state-space.
By a solution  of $\Sigma$ we mean a tuple of trajectories $(x,y,u,p)\in(\mathcal{X},\mathcal{Y},\mathcal{U},\mathcal{P})$ satisfying 
\eqref{equ:FALPVSystem} for all $t \in \mathbb{N}$, where
$\mathcal{X}=\mathbb{X}^\mathbb{N}, \mathcal{Y}=\mathbb{Y}^{\mathbb{N}}, 
\mathcal{U}=\mathbb{U}^{\mathbb{N}},\mathcal{P}=\mathbb{P}^\mathbb{N}$. 
Define the \emph{input-output map}
	\( Y_{\Sigma} :   \mathcal{U} \times \mathcal{P}  \rightarrow \mathcal{Y} \)
of $\Sigma$ 
such that for any $(x,y,u,p) \in \mathcal{X} \times \mathcal{Y} \times \mathcal{U} \times \mathcal{P}$,
$y=Y_{\Sigma}(u,p)$ holds if and only if
$(x,y,u,p)$ is a solution of $\Sigma$ and $x(0)=0$.
Two FALPV models are said to be \emph{input-output equivalent}, if their input-output maps coincide. 
 An FALPV model $\Sigma$ is said to be
\emph{a minimal}, if for any FALPV model $\hat{\Sigma}$ which is input-output equivalent to
 $\Sigma$, $\dim(\Sigma) \le \dim(\hat{\Sigma})$. 
We assumed zero initial state for simplicity, the results can be extended to to the case of non-zero initial state.
From \cite{Petreczky2016}, 
\footnote{\textcolor{red}{We apply  \cite{Petreczky2016} with the new scheduling variable $\psi(p)$}.}
we have the following result:
\textbf{(1)} A FALPV is minimal if and only if it is extended reachability and observability matrices are full row and column rank, respectively. 
\textbf{(2)}	Any two minimal FALPVs which are input-output equivalent are isomorphic 
\textbf{(3)} Any FALPV can be transformed to an input-output equivalent minimal FALPV.

\subsection{Linear Fractional Representation}\label{sec:LFT}

\begin{wrapfigure}{r}{0.2\textwidth}
  \centering
  \scalebox{0.75}{
  \begin{picture}(100,50)(0,0)
  \linethickness{0.3mm}
  \put(56,36){\line(1,0){44}}
  \put(56,6){\line(0,1){30}}
  \put(100,6){\line(0,1){30}}
  \put(56,6){\line(1,0){44}}
  \linethickness{0.3mm}
  \put(68,68){\line(1,0){20}}
  \put(68,50){\line(0,1){18}}
  \put(88,50){\line(0,1){18}}
  \put(68,50){\line(1,0){20}}
  \put(78,21){\makebox(0,0)[cc]{$\LFTMatrix$}}
  \put(78,60){\makebox(0,0)[cc]{$\Delta$}}
  \linethickness{0.3mm}
  \put(44,28){\line(1,0){12}}
  \linethickness{0.3mm}
  \put(44,28){\line(0,1){32}}
  \linethickness{0.3mm}
  \linethickness{0.3mm}
  \put(44,60){\line(1,0){24}}
  \put(68,60){\vector(1,0){0.12}}
  \linethickness{0.3mm}
  \put(88,60){\line(1,0){24}}
  \linethickness{0.3mm}
  \put(112,28){\line(0,1){32}}
  \linethickness{0.3mm}
  \put(100,28){\line(1,0){12}}
  \put(100,28){\vector(-1,0){0.12}}
  \linethickness{0.3mm}
  \put(100,12){\line(1,0){36}}
  \put(100,12){\vector(-1,0){0.12}}
  \linethickness{0.3mm}
  \put(18,12){\line(1,0){38}}
  \put(18,12){\vector(-1,0){0.12}}
  \put(124,19){\makebox(0,0)[cc]{$u$}}
  \put(33,20){\makebox(0,0)[cc]{$y$}}
  \put(33,36){\makebox(0,0)[cc]{$z$}}
  \put(124,34){\makebox(0,0)[cc]{$w$}}
  \end{picture}
  }
  \caption[\footnotesize General LFT]{\footnotesize General LFT}
  \label{fig:lft_figure}
\end{wrapfigure}
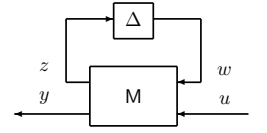
A Linear Fractional Transformation Representation (LFT) \footnote{\textcolor{red}{Note that in the literature, 
 sometimes the more logical expression of Linear Fractional Representation (LFR) is used. We prefer to use the term LFT, as it is more common.}}
is a feedback interconnection of an LTI system $(\LFTA,\LFTB,\LFTC,\LFTD)$ 
 with an uncertainty \textcolor{red}{(possibly dynamic)} operator $\Delta$ on \textcolor{red}{discrete-time signals, of the form}
 \begin{equation}
\label{delt:struc:def}
\begin{split}
  & \Delta = \Diag[\delta_1 \In{\nLFT_1}, \cdots, \delta_d \In{\nLFT_d}], \\
  \end{split}
 \end{equation}
  where $\delta_i: (\mathbb{R})^{\mathbb{N}} \rightarrow (\mathbb{R})^{N}$ , 
  $\forall i \in \inrange{1}{d}$, are linear operators on \textcolor{red}{discrete-time signals.}

Formally, an LFT is a tuple 
	\begin{equation}
	\label{LFTform}
	\LFTM=(\LFTMatrix,\Delta)=(p,m,d, \{\nLFT_{i}\}_{i=1}^{d},\LFTA,\LFTB,\LFTC,\LFTD),
	\end{equation}
	where $p,m,d, \nLFT_{i}$, $i \in \inrange{1}{d}$ are positive integers, 
  and  $\LFTA \in \mathbb{R}^{\nLFT \times \nLFT}$, $\LFTB \in \mathbb{R}^{\nLFT \times m}$,
   $\LFTC \in \mathbb{R}^{p \times \nLFT}$ and $\LFTD \in \mathbb{R}^{p \times m}$, $\nLFT=\sum_{i=1}^{d} \nLFT_{i}$. 


A \emph{solution} of $\LFTM$ for $\Delta$ of the form \eqref{delt:struc:def} is a tuple $(z,y,u)$ 
such that $z,w \in (\mathbb{R}^{\nLFT})^{\mathbb{N}}$,
$y \in (\mathbb{R}^{p})^{\mathbb{N}}$, $u \in  (\mathbb{R}^{m})^{\mathbb{N}}$,
such that $z=Aw+Bu$, $y=Cw+Du$ and  $w=\Delta z$

If we view $\LFTA$ as a linear operator on $(\mathbb{R})^{\mathbb{N}}$ applied element-wise, i.e.,
$(Az)(t)=Az(t)$, $t \in \mathbb{N}$, and the linear operator $I-A\Delta$ is invertible, when restricted to a subspace
containing $Bu$,  then the relationship between the input and the output can be expressed
$y=(\LFTM \star \Delta)u$, where 
$\LFTM \star \Delta:=\LFTD+ \LFTC\Delta(\In{\nLFT}-\LFTA\Delta)^{-1}\LFTB$ is the \emph{star product} \cite{Zhou1996}. 

A sufficient condition for the star product to be well-defined is as follows. 
Following \cite[Theorem 4]{Beck1999} \textcolor{red}{we call} the LFT $\LFTM$ \emph{stable},
if there exists positive definite matrices $\{P_i\}_{i=1}^d$ of size $\nLFT_i \times \nLFT_i$
such that $P=\Diag[P_1,\ldots,P_d]$ satisfy $\textcolor{red}{\LFTA^T P \LFTA} - P < 0$.
Let $\Delta$ be of the form \eqref{delt:struc:def} such that 
$\Delta$ is a linear operator from $\mathcal{L}(l_2(\mathbb{R}^{\nLFT}))$ and 
$\|\Delta\| < 1$, where
$\|\cdot\|$ is the induced operator norm.
If
$M$ is stable,  
then operator $(I-\textcolor{red}{\LFTA} \Delta)$ is invertible, the start-product $\LFTM \star \Delta$ is well-defined, 
and for any $u \in l^2(\mathbb{R}^m)$ there exists a unique solution $(z,y,u)$ of $\LFTM$ for $\Delta$, \cite[Theorem 4]{Beck1999}.
\textcolor{red}{A more general condition for well-posedness of $\LFTM \star \Delta$ can be formulated using
  \emph{structured singular values} \cite{Zhou1996}.}


 In order to avoid problems with the existence of a solution,
 we \textcolor{red}{view} LFTs as multidimensional systems \cite{Alkhoury2016,Beck1999,Ball2008}, and
 we define the concept of \emph{formal input-output map}.

\begin{Definition}\label{def:canonicalPartitioning}
  The \emph{canonical partitioning} of $\LFTM$ from \eqref{LFTform}
  is collection $\{\LFTC_i, \LFTA_{i,j},\LFTB_j\}_{i,j=1}^{d}$ of matrices 
  such that
  $\LFTA_{i,j} \in \mathbb{R}^{\nLFT_i \times \nLFT_j}$, $\LFTB_j \in \mathbb{R}^{\nLFT_j \times m}$, $\LFTC_i \in \mathbb{R}^{p \times \nLFT_i}$, 
  $i,j \in \mathbb{I}_1^d$, and 
	\begin{align*}
		\LFTA=\begin{bmatrix} 
		\LFTA_{1,1} & \LFTA_{1,2} & \cdots & \LFTA_{1,d} \\
		\LFTA_{2,1} & \LFTA_{2,2} & \cdots & \LFTA_{2,d} \\
		\vdots & \vdots   & \ddots & \vdots \\
		\LFTA_{d,1} & \LFTA_{d,2} & \cdots & \LFTA_{d,d} 
		\end{bmatrix}\!\!, & & \!\! \LFTB=\begin{bmatrix} \LFTB_1 \\ \LFTB_2 \\ \vdots \\ \LFTB_d \end{bmatrix}\!\!, & &\!\! 
    \LFTC=\begin{bmatrix} \LFTC_1^{T} \\ \LFTC_2^{T} \\ \vdots \\ \LFTC_d^{T} \end{bmatrix}^{T}\!\!\!\!,
	\end{align*}
\end{Definition}
\begin{Definition}\label{def:fiom_lfr}
  The \emph{formal input-output map} of an LFT $\LFTM$ is a
   function $Y_{\LFTM}: (\mathbb{I}_1^d)^{*} \rightarrow \mathbb{R}^{p \times m}$ such that
	$\textcolor{red}{Y_{\LFTM}(\epsilon)=\LFTD}$, and 
    for all $i_1,\ldots,i_k \in \textcolor{red}{\mathbb{I}_1^d}$ and  $k >  0$,
		\[
		\begin{split}
		& Y_{\LFTM}(i_1\cdots i_k)=\left\{\begin{array}{rl} \LFTC_{i_1}\LFTB_{i_1}  &  k = 1, \\
		\LFTC_{i_k}\LFTA_{i_k,i_{k-1}} \cdots \LFTA_{i_2,i_1}\LFTB_{i_1} & k > 1 ,
		\end{array}\right. \\
		\end{split}
		\] 
	where $\{\LFTC_i, \LFTA_{i,j},\LFTB_j\}_{i,j=1}^{d}$ is the canonical partitioning of $\LFTM$.
\end{Definition}
From \cite[Theorem 4, Proof of Lemma 11]{Beck1999}, it follows that if $\LFTM$ is stable  and for $\Delta$ of the form
\eqref{delt:struc:def} such that $\|\Delta\| < 1$
	\begin{align}
		&(\LFTMatrix \star \Delta)u = \textcolor{red}{\sum_{\nu \in (\mathbb{I}_1^d)^{*}} Y_{\LFTM}(\nu) \delta_{\nu}\I_m u}
     \!\!\! \!\! \!\! \label{eq:star1} 
	\end{align}
    \textcolor{red}{where $\delta_{\epsilon}$ is the identity operator and for $\nu=i_1\cdots i_k$, $i_1,\ldots,i_k \in \mathbb{I}_1^d$, $k> 0$,
    $\delta_{\nu}(u)$ is the composition $\delta_{i_1}\cdots \delta_{i_k}$.}
	That is, \emph{the formal input-output map $Y_{\LFTM}$ determines the star product $\LFTMatrix \star \Delta$},
  if  $\LFTM$ is stable and $\Delta$ is a contraction.
  \textcolor{red}{Note that for  uncertainity blocks arising from scheduling signals, then the resulting input-output function
   is similar to that of FALPVs: it maps inputs and scheduling signals to outputs.  
  }
  \textcolor{red}{Note than  the LFTs used in this paper need not have the same input, output and state dimensions as the FALPVs, 
hence the separate notation for these quantities.}
\textcolor{red}{Two} LFTs $\LFTM_1$ and $\LFTM_2$ are \emph{\textcolor{red}{said to be} formally input-output equivalent,}
$Y_{\LFTM_1}=Y_{\LFTM_2}$.  If $\LFTM$ is an LFT of the form \eqref{LFTform}, then $\nLFT$ the \emph{dimension} of
$\LFTM$ and we denote it by $\dim \LFTM$. We say that the LFT $\widehat{\LFTM}$ is \emph{minimal}, 
if for any LFT $\LFTM$ which is formally input-output equivalent to $\widehat{\LFTM}, \dim \widehat{\LFTM} \leq \dim \LFTM$.
%
%
\textcolor{red}{By} \cite[Theorem 20]{Beck1999} 
an LFT is minimal, if and only if its reachability and observability matrices are full row and column ranks respectively.
 \textcolor{red}{By} \cite[Theorem 8.2]{Ball2005} minimal LFTs which are formally input-output equivalent are isomorphic.
\textcolor{red}{The minimality concept above considers the number of blocks in $\Delta$ fixed: while restrictive in general, it is sufficient for our 
 purposes, as for the LFTs of interest the number of blocks in $\Delta$ is determined by the number of scheduling variables.}

\section{Transforming an FALPV model to an LFT}\label{sec:ALPVtoLFT}

\textcolor{red}{Let} $\Sigma$ be an FALPV of the form \eqref{equ:FALPVSystem}.
For any $\mathbf{p}=(\mathbf{p}_1,\ldots,\mathbf{p}_{\NP})^T \in \mathbb{P}$,
and integers  $\{\nLFT_i\}_{i=1}^d$, define the matrix 
\begin{equation}
\label{delta:const}
\Delta_{\mathbf{p}}:=\Delta_{\mathbf{p}}(\{\nLFT_i\}_{i=1}^d)\!\!=\!\! \Diag[\mathbf{p}_1 \I_{\nLFT_1}, \mathbf{p}_2 \I_{n_2}, \cdots, \mathbf{p}_{n_{d}} \I_{n_{d}}],
\end{equation}
\begin{Definition}
The function $\psi$ \emph{recognizable}, if 
there exists a minimal and stable LFT $M=(\NPSI,1,\{n_i\}_{i=1}^{\NP},F,H,G,0)$, 
called a \emph{realization of $\psi$},  such that  for any
$\mathbf{p} \in \mathbb{P}$, 
\begin{align}
\psi(\mathbf{p})=H\Delta_{\mathbf{p}}(I\!\!-\!\!F\Delta_{\mathbf{p}}))^{-1}G, ~
\label{psi:real}
\end{align}  
\textcolor{red}{where $\Delta_{\mathbf{p}}:=\Delta_{\mathbf{p}}(\{n_i\}_{i=1}^{\NP})$.}
\end{Definition}
Note that the matrix $I-F\Delta_{\mathbf{p}}$ is invertible for all $\mathbf{p} \in \mathbb{P}$.
Indeed,
$\Delta_{\mathbf{p}}$ can be identified with the linear operator on $\mathcal{L}(l_2(\mathbb{R}^{\sum_{i=1}^{\NP} n_i}))$ defined by 
$\Delta_{\mathbf{p}}z(t)=\Delta_{\mathbf{p}} z(t)$. It is easy to see that $\Delta_{\mathbf{p}}$
is of the form \eqref{delt:struc:def}  and $\|\Delta_{\mathbf{p}}\| < 1$ in the operator norm.
Hence, by \cite[Theorm 4]{Beck1999}, stability of $M$ implies that the the operator
$I-F\Delta_{\mathbf{p}}$ is invertible, 
which implies that the matrix  $I-F\Delta_{\mathbf{p}}$
is invertible for all $\mathbf{p} \in \mathbb{P}$.

\textcolor{red}{Let \emph{$\LPVS$ be the formal input-output map of the LFT $M$}. Then }
\begin{align}
\label{psi:series}
  \psi(\mathbf{p}) = \sum_{k=1}^{\infty} \sum_{i_1 \ldots i_k=1}^{\NP} 
\LPVS(i_1 \cdots i_k) 
\prod_{j=1}^{k} \mathbf{p}_{i_j}
\end{align}
and the series on the right-hand side is absolutely convergent, i.e. 
\textcolor{red}{$\psi$ is analytic and its Taylor-series coefficients are the values of $\LPVS$.}
\textcolor{red}{This explains the use of the term \emph{recognizable}: 
by \cite{Beck1999} $\LPVS$  is a recognizable formal power series \cite{Son:Real}.}
\begin{Remark}
From \cite[Theorem 1.1]{ALPAY2003225} it follows that if $\psi$ is rational, then
it has a LFT realization \textcolor{red}{$(\NPSI,1,\{n_i\}_{i=1}^{\NP},F,H,G,0)$. By choosing $\lambda=1/\|F\|_2$,
it follows that $(\NPSI,1,\{n_i\}_{i=1}^{\NP},\lambda F, \lambda H,G,0)$ is a stable LFT realization of 
of the map $\mathbf{p} \mapsto \psi(\lambda \mathbf{p})$.}
That is, rational functions, after a scaling of its arguments, are recognizable.
\end{Remark}

Next, we define the map
$\tilde{\LPVS}_{\Sigma}:(\mathbb{I}_1^{\NP})^* \to \mathbb{R}^{(\NX+\NY) \times (\NX+\NU)}$ 
\begin{equation}
\forall \nu \in (\mathbb{I}_1^{\NP})^{*}: ~ \tilde{\LPVS}_{\Sigma}(\nu)=\textcolor{red}{\sum_{l=1}^{\NPSI}} \begin{bmatrix}\LPVA_l & \LPVB_l\\\LPVC_l & \LPVD_l
\end{bmatrix} \LPVS^l(\nu). \\
\end{equation}
where $\LPVS^l(\nu)$ the $l$th component of $\LPVS(\nu)$, i.e., $\LPVS(\nu)=(\LPVS^1(\nu),\ldots,\LPVS^{\textcolor{red}{\NPSI}}(\nu))^T$. 
We need minimal and stable LFT realizations of $\tilde{\LPVS}_{\Sigma}$, existence of which is shown below.
\begin{Lemma}
  \label{lem:rational}
 \textbf{(I)}
  Let $M=(\NPSI,1,\{n_i\}_{i=1}^{\NP},F,H,G)$ be a minimal and stable LFT realization of $\psi$.
  Let  $n'_i=n_i(\NX+\NU)$, $i \in \inrange{1}{\NP}$, and define
  \begin{equation}
  \label{psi_LFT1}
  \widetilde{\mathcal{M}}(\Sigma,\psi)=(\NX+\NY,\NX+\NU, \NP, \{n'_i\}_{i=1}^{\NP},\widetilde{\mathcal{F}},\widetilde{\mathcal{G}},
  \widetilde{\mathcal{H}},0)  
  \end{equation}
   such that $\textcolor{red}{\widetilde{\mathcal{F}}}=F \otimes I_{\NX+\NU}$, $\textcolor{red}{\widetilde{\mathcal{G}}}=G \otimes I_{\NX+\NU}$,
   and 
\begin{align*}
    \widetilde{\mathcal{H}}=\sum_{l=1}^{\NPSI}H_l \otimes \begin{bmatrix}\LPVA_l & \LPVB_l\\\LPVC_l & \LPVD_l
\end{bmatrix}, ~
\end{align*}
where $H_l$ is the $l$th row of $H$, and
$\otimes$ denotes the Kronecker product.
Then $\widetilde{\mathcal{M}}(\Sigma,\psi)$ is a stable  LFT realization of 
$\tilde{\LPVS}_{\Sigma}$.

\textbf{(II)} 
Denote the  result of applying the  minimization algorithm \textcolor{red}{\cite[Theorem 7.1]{Ball2005}} to
$\widetilde{M}(\Sigma,\psi)$ by
\begin{equation}
  \label{psi_LFT}
  \mathcal{M}(\Sigma,\psi)\!\!=\!\!(\NX+\NY,\NX+\NU, \NP, \{\bar{n}_i\}_{i=1}^{\NP},\mathcal{F},\mathcal{G},\mathcal{H},0)  
  \end{equation}
 Then $\mathcal{M}(\Sigma,\psi)$ is a stable and minimal realization of $\tilde{\LPVS}_{\Sigma}$ and 
for any $\mathbf{p} \in \mathbb{P}$, with $\Delta_{\mathbf{p}}:=\Delta_{\mathbf{p}}(\{\bar{n}_i\}_{i=1}^{\NP})$,
the matrix $(\I - \mathcal{F}\Delta_{\mathbf{p}})$
is invertible and
\begin{equation}\label{eq:SumMatricesPsi}
   \sum_{l=1}^{\NPSI}\begin{bmatrix}\LPVA_l & \LPVB_l\\\LPVC_l & \LPVD_l\end{bmatrix}\
   \psi_l (\mathbf{p})= \mathcal{H} \Delta_{\mathbf{p}} (\I - \mathcal{F}\Delta_{\mathbf{p}})^{-1} \mathcal{G} 
\end{equation}
\end{Lemma}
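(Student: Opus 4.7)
The plan is to prove Part (I) by direct algebraic verification using standard Kronecker product identities, and Part (II) by appealing to the properties of the minimization algorithm of \cite{Ball2005} together with the series expansion of the star product from \cite{Beck1999}.

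For Part (I), stability of $\widetilde{\mathcal{M}}(\Sigma,\psi)$ is obtained by lifting the Lyapunov certificate of $M$: since $M$ is stable, there exists a block-diagonal $P=\Diag[P_1,\ldots,P_{\NP}]$, with $P_i \in \mathbb{R}^{n_i\times n_i}$ positive definite, satisfying $F^{T}PF-P<0$. Taking $\widetilde{P}=P\otimes I_{\NX+\NU}$, which is block-diagonal with blocks of size $n'_i \times n'_i$, the identity $(F\otimes I)^{T}(P\otimes I)(F\otimes I)=(F^{T}PF)\otimes I$ yields
\[
\widetilde{\mathcal{F}}^{T}\widetilde{P}\widetilde{\mathcal{F}}-\widetilde{P}=(F^{T}PF-P)\otimes I_{\NX+\NU}<0.
\]
For the realization property, I would compute the canonical partitioning of $\widetilde{\mathcal{M}}(\Sigma,\psi)$ with respect to the sizes $\{n'_i\}$: $\widetilde{\mathcal{F}}_{i,j}=F_{i,j}\otimes I_{\NX+\NU}$, $\widetilde{\mathcal{G}}_j=G_j\otimes I_{\NX+\NU}$, and $\widetilde{\mathcal{H}}_i=\sum_{l=1}^{\NPSI}H_{l,i}\otimes\begin{bmatrix}\LPVA_l & \LPVB_l\\ \LPVC_l & \LPVD_l\end{bmatrix}$, where $H_{l,i}$ and $G_j$ denote the row/column blocks of $H_l$ and $G$ determined by the partition $\{n_i\}$. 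A telescoping application of $(A\otimes B)(C\otimes D)=AC\otimes BD$, combined with the fact that each $\LPVS^l(\nu)$ is a scalar, then gives
\[
Y_{\widetilde{\mathcal{M}}}(i_1\cdots i_k)=\sum_{l=1}^{\NPSI}\LPVS^l(i_1\cdots i_k)\begin{bmatrix}\LPVA_l & \LPVB_l\\ \LPVC_l & \LPVD_l\end{bmatrix}=\tilde{\LPVS}_{\Sigma}(i_1\cdots i_k),
\]
while the case $\nu=\epsilon$ is immediate since both feedthrough matrices vanish.

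For Part (II), minimality of $\mathcal{M}(\Sigma,\psi)$ and preservation of the formal input-output map are precisely the output of \cite[Theorem 7.1]{Ball2005}, so $\mathcal{M}(\Sigma,\psi)$ realizes $\tilde{\LPVS}_{\Sigma}$. Stability preservation requires a separate argument: the minimization proceeds via block-diagonal similarity transformations that restrict to the reachable and observable subspaces, and the block-diagonal Lyapunov certificate of $\widetilde{\mathcal{M}}(\Sigma,\psi)$ restricts to these surviving subspaces to yield one for $\mathcal{F}$. Invertibility of $I-\mathcal{F}\Delta_{\mathbf{p}}$ then follows from \cite[Theorem 4]{Beck1999} by the same reasoning used earlier for $M$. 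Finally, \eqref{eq:SumMatricesPsi} is obtained by applying \eqref{eq:star1} to $(\mathcal{M}(\Sigma,\psi),\Delta_{\mathbf{p}})$: since each $\delta_i$ is multiplication by the scalar $\mathbf{p}_i$, the operator series collapses to the matrix series
\[
\mathcal{H}\Delta_{\mathbf{p}}(I-\mathcal{F}\Delta_{\mathbf{p}})^{-1}\mathcal{G}=\sum_{k\ge 1}\sum_{i_1,\ldots,i_k=1}^{\NP}\tilde{\LPVS}_{\Sigma}(i_1\cdots i_k)\prod_{j=1}^{k}\mathbf{p}_{i_j}=\sum_{l=1}^{\NPSI}\psi_l(\mathbf{p})\begin{bmatrix}\LPVA_l & \LPVB_l\\ \LPVC_l & \LPVD_l\end{bmatrix},
\]
where the final equality uses the definition of $\tilde{\LPVS}_{\Sigma}$ together with the absolutely convergent representation \eqref{psi:series} of $\psi$.

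The main obstacle I anticipate is the bookkeeping around the block structure: the canonical partitioning of $\widetilde{\mathcal{M}}(\Sigma,\psi)$ must be carried out relative to the refined sizes $\{n'_i\}$ rather than $\{n_i\}$, and one must verify that the block-diagonal Lyapunov certificate lifts correctly under the Kronecker product. A secondary subtlety is justifying that the minimization of \cite{Ball2005} preserves the block-diagonal Lyapunov structure required by the notion of stability in \cite{Beck1999}; this hinges on the similarity transformations being themselves block-diagonal with respect to the partition $\{n'_i\}$, so that positive definiteness of the Lyapunov matrix is inherited blockwise on the reduced realization.
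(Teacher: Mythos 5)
Your proposal is correct, and Part (I) coincides with the paper's own argument: lift the structured Lyapunov certificate via $\widetilde{P}=P\otimes I_{\NX+\NU}$ and the identity $(F\otimes I)^{T}(P\otimes I)(F\otimes I)=(F^{T}PF)\otimes I$, then match the formal input-output maps through the canonical partitioning $\widetilde{\mathcal{F}}_{i,j}=F_{i,j}\otimes I$, $\widetilde{\mathcal{G}}_j=G_j\otimes I$, $\widetilde{\mathcal{H}}_i=\sum_l (H_i)_l\otimes\left[\begin{smallmatrix}\LPVA_l & \LPVB_l\\ \LPVC_l & \LPVD_l\end{smallmatrix}\right]$ and repeated use of $(A\otimes B)(C\otimes D)=AC\otimes BD$. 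In Part (II) you deviate mildly from the paper in two places. First, the paper obtains minimality \emph{and} stability of $\mathcal{M}(\Sigma,\psi)$ in one stroke by citing the proof of Lemma 11 in \cite{Beck1999}, whereas you cite \cite[Theorem 7.1]{Ball2005} for minimality and supply your own sketch of stability preservation under structured reduction; your sketch is essentially the right mechanism (structured similarity keeps the certificate block-diagonal, and in Kalman form the minimal block column has zero non-minimal rows, so the principal sub-block of the Lyapunov inequality survives compression), though as written it glosses over the fact that a naive compression of a Lyapunov inequality fails in general and the triangular structure of the decomposition is what saves it. Second, for \eqref{eq:SumMatricesPsi} the paper first proves the analogous identity for $\widetilde{\mathcal{M}}(\Sigma,\psi)$ by Kronecker manipulation of $\psi_l(\mathbf{p})=H_l\bar{\Delta}_{\mathbf{p}}(\I-F\bar{\Delta}_{\mathbf{p}})^{-1}G$ with $\widetilde{\Delta}_{\mathbf{p}}=\bar{\Delta}_{\mathbf{p}}\otimes\I$, and then transfers it to $\mathcal{M}(\Sigma,\psi)$ via equality of star products (\cite[Theorem 9]{Beck1999}); you instead expand $\mathcal{M}(\Sigma,\psi)\star\Delta_{\mathbf{p}}$ as the power series \eqref{eq:star1} and resum it using \eqref{psi:series}, which avoids Theorem 9 at the cost of invoking absolute convergence of the series (inherited from that of $\psi$, since $\tilde{\LPVS}_{\Sigma}(\nu)$ is a fixed linear combination of the entries of $\LPVS(\nu)$). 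Both routes are valid; the paper's leans more on cited LFT equivalence results, yours is slightly more self-contained at the series level.
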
  
Consider   a minimal stable LFT realization 
$\mathcal{M}(\Sigma,\psi)$ of $\tilde{\LPVS}_{\Sigma}$ defined in \eqref{psi_LFT}, and 
the following partitioning of $\mathcal{H}$ and $\mathcal{G}$
\begin{align*}
\mathcal{H} = \begin{bmatrix}
\mathcal{H}_x^T & \mathcal{H}_y^T
\end{bmatrix}^T, \quad 
\mathcal{G} = \begin{bmatrix}
\mathcal{G}_x & \mathcal{G}_u
\end{bmatrix}
\end{align*}
such that $\mathcal{H}_x$ has $\NX$ rows, $\mathcal{H}_y$ has $\NY$ rows, $\mathcal{G}_x$ has $\NX$ columns and $\mathcal{G}_u$ has $\NU$ columns. 

We define the LFT $\mathcal{M}(\Sigma)$ \emph{associated with $\Sigma$}
 as follows.
\begin{equation}
\label{FALPV2LFT}
\mathcal{M}(\Sigma)=(\NY,\NY, \NP+1, \{\tilde{n}_i\}_{i=1}^{\NP+1}, \LFTA,\LFTB,\LFTC,\LPVD_0)
\end{equation}
where 
$\tilde{n}_1=\NX$, $\tilde{n}_i=\bar{n}_{i-1}$, $i=2,\ldots,\NP+1$, and
\begin{align*}
  \LFTA=\begin{bmatrix} \LPVA_0 & \mathcal{H}_x  \\ \mathcal{G}_x & \mathcal{F} \end{bmatrix}, ~\quad
  \LFTB=\begin{bmatrix} \LPVB_0  \\ \mathcal{G}_u \end{bmatrix}, \quad 
  \LFTC=\begin{bmatrix} \LPVC_0  &  \mathcal{H}_y \end{bmatrix}
\end{align*}  
\textcolor{red}{Note that we use a specific notation for each system class: $\LPVA,\ldots$ is used for the matrices of the FALPV $\Sigma$,
$\LFTA, \ldots $ for the matrices of a  LFT arising from  FALPV, $F,G,H$ stand for the matrices of a minimal LFT realization of $\psi$,
and $\mathcal{F},\ldots$ are the matrices of the minimal LFT realizations of $\tilde{\LPVS}_{\Sigma}$ .}

We show that $\mathcal{M}(\Sigma)$ is input-output equivalent to $\Sigma$ for
the uncertainty structures arising from scheduling signals. To this end, for any scheduling signal $p \in \mathcal{P}$, define
the \emph{uncertainty 
$\Delta(p)$ block corresponding to $p$} as
\[ \Delta(p)=\Diag[\lambda I_{\NX}, p_1 \I_{\tilde{n}_1}, \cdots, p_{\NP} \I_{\tilde{n}_{\NP}}] \]
where $\lambda$ is the backwards shift, and $p_i$ is interpreted as the element-wise product with the $i$th entry $p$,
i.e. $(\lambda z)(t)=z(t-1)$ and $(p_iz)(t)=p_i(t)z(t)$ for any sequence $z$. 
\begin{Theorem}[Main result]\label{th:FALPVtoLFT}
  Assume that $\psi$ is recognizable. 
	\begin{enumerate}
    \item
      \label{th:FALPVtoLFT:part0}
  For any solution $(x,y,u,p)$ of $\Sigma$ there exists a unique solution $(z,y,u)$ of 
  $\mathcal{M}(\Sigma)$ for $\Delta(p)$ such that $z=(x^T,\tilde{z}^T)^T$.
    \item 
                \label{th:FALPVtoLFT:part1}
		The FALPV models $\Sigma$ and $\hat{\Sigma}$ are input-output equivalent $\iff$
		$\mathcal{M}(\Sigma)$ and $\mathcal{M}(\hat{\Sigma})$ are formally input-output equivalent.
		\item
                \label{th:FALPVtoLFT:part2}
		$\Sigma$ is minimal$\iff$ $\mathcal{M}(\Sigma)$ is a  minimal.
	\end{enumerate}
	\end{Theorem}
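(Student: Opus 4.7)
The plan is to prove the three parts sequentially, relying on Lemma~\ref{lem:rational}, equation~\eqref{eq:SumMatricesPsi}, the realization theory of ALPV systems from \cite{Petreczky2016}, and the realization theory of LFTs from \cite{Beck1999,Ball2005}. For Part~\ref{th:FALPVtoLFT:part0}, given a solution $(x,y,u,p)$ of $\Sigma$ with $x(0)=0$, I would first define
\[
\tilde{z}(t) := (\I - \mathcal{F}\Delta_{p(t)})^{-1}(\mathcal{G}_x x(t) + \mathcal{G}_u u(t)),
\]
which is well-defined because Lemma~\ref{lem:rational} guarantees invertibility of $\I - \mathcal{F}\Delta_{p(t)}$ for every $\mathbf{p}\in\mathbb{P}$. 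Setting $z$ as the concatenation of an appropriate time-shifted copy of $x$ with $\tilde{z}$ and $w=\Delta(p)z$, I would verify the LFT equations by substituting~\eqref{eq:SumMatricesPsi}, which rewrites $\sum_l \psi_l(p(t))\bigl[\begin{smallmatrix}\LPVA_l & \LPVB_l\\\LPVC_l & \LPVD_l\end{smallmatrix}\bigr]$ as $\mathcal{H}\Delta_{p(t)}(\I-\mathcal{F}\Delta_{p(t)})^{-1}\mathcal{G}$. After the partitioning of $\mathcal{H}$ and $\mathcal{G}$, this identifies $\mathcal{H}_x w_2(t)$ with $\sum_l \psi_l(p(t))(\LPVA_l x(t)+\LPVB_l u(t))$, and analogously for the output equation, thereby matching the LFT dynamics to the FALPV. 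Uniqueness of $\tilde{z}$, and hence of $(z,y,u)$, is immediate from the same invertibility.

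For Part~\ref{th:FALPVtoLFT:part1}, I would view $\Sigma$ as an ALPV with scheduling variable $\psi(p)$, so that by \cite{Petreczky2016} the input-output map $Y_\Sigma$ is determined by (and determines) the ALPV Markov parameters $\LPVC_{j_k}\LPVA_{j_{k-1}}\cdots\LPVA_{j_1}\LPVB_{j_0}$ with $j_i\in\{0,1,\ldots,\NPSI\}$. I would then compute the formal Markov parameters $Y_{\mathcal{M}(\Sigma)}(\nu)$ for $\nu\in(\mathbb{I}_1^{\NP+1})^{*}$ via the canonical partitioning of~\eqref{FALPV2LFT}. Combining this with the formal IO map $\LPVS$ of the realization $M$ of $\psi$ and the series expansion~\eqref{psi:series}, a combinatorial unfolding shows that the ALPV Markov parameters of $\Sigma$ and the values $Y_{\mathcal{M}(\Sigma)}(\nu)$ determine each other by finite linear combinations whose coefficients are entries of $\LPVS$. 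Hence $Y_\Sigma=Y_{\hat\Sigma}\iff Y_{\mathcal{M}(\Sigma)}=Y_{\mathcal{M}(\hat\Sigma)}$. For Part~\ref{th:FALPVtoLFT:part2}, I would combine the minimality characterizations: by \cite{Petreczky2016}, $\Sigma$ is minimal iff its extended ALPV reachability and observability matrices (with scheduling $\psi(p)$) are full rank, while by \cite[Theorem 20]{Beck1999}, $\mathcal{M}(\Sigma)$ is minimal iff its LFT reachability and observability matrices are full rank. Using the Markov-parameter correspondence from Part~\ref{th:FALPVtoLFT:part1} together with the minimality of $\mathcal{M}(\Sigma,\psi)$ supplied by Lemma~\ref{lem:rational}, I would translate one rank condition into the other. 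An alternative route is to combine Part~\ref{th:FALPVtoLFT:part1} with uniqueness up to isomorphism of minimal LFT realizations \cite[Theorem 8.2]{Ball2005}, and to compare dimensions directly.

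The main obstacle is Part~\ref{th:FALPVtoLFT:part2}: translating the rank of the ALPV reachability/observability matrices of $\Sigma$ (indexed by sequences over $\{0,1,\ldots,\NPSI\}$) into the rank of the LFT reachability/observability matrices of $\mathcal{M}(\Sigma)$ (indexed over $\{1,\ldots,\NP+1\}$) requires a careful block-structured computation. The crucial insight is that minimality of the auxiliary LFT $\mathcal{M}(\Sigma,\psi)$ prevents any excess dimension in the $\psi$-dependent blocks of $\mathcal{M}(\Sigma)$, so that any possible redundancy of $\mathcal{M}(\Sigma)$ must already be visible in the FALPV state of dimension $\NX$; this is what aligns the two minimality conditions.
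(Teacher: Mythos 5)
Your Part~\ref{th:FALPVtoLFT:part0} is essentially the paper's argument (same definition of $\tilde{z}$, same use of the invertibility from Lemma~\ref{lem:rational} and of \eqref{eq:SumMatricesPsi}), and your primary route for Part~\ref{th:FALPVtoLFT:part2} — splitting the state of $\mathcal{M}(\Sigma)$ into the $\NX$-block, handled by minimality of $\Sigma$, and the $\psi$-blocks, handled by minimality of $\mathcal{M}(\Sigma,\psi)$ — is the same case analysis the paper carries out for reachability (with observability by duality). The "alternative route" you mention for Part~\ref{th:FALPVtoLFT:part2} (isomorphism of minimal LFT realizations plus a dimension count) is not actually available as a shortcut: it presupposes that $\mathcal{M}(\cdot)$ sends some minimal FALPV to a minimal LFT, which is exactly what is to be proved.

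The genuine gap is in the converse direction of Part~\ref{th:FALPVtoLFT:part1}. You assert that the ALPV Markov parameters $\LPVC_i\LPVA_v\LPVB_j$, $v \in (\mathbb{I}_0^{\NPSI})^{*}$, and the formal Markov parameters $Y_{\mathcal{M}(\Sigma)}(\nu)$ "determine each other by finite linear combinations whose coefficients are entries of $\LPVS$." Only one direction of this is straightforward: expressing $Y_{\mathcal{M}(\Sigma)}(\nu)$ through the FALPV Markov parameters with system-independent coefficients (the paper's Lemma~\ref{FALPV2LFT:lemma3}, via Lemmas~\ref{decomp:word} and \ref{FALPV2LFT:lemma2.0}). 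The reverse recovery — extracting the individual matrices $\LPVA_l,\LPVB_l,\LPVC_l,\LPVD_l$ (and hence the products $\LPVC_i\LPVA_v\LPVB_j$) from quantities in which they only ever appear through the sums $\sum_{l=1}^{\NPSI}\LPVX_l\LPVS^l(\cdot)$, interleaved with powers of $\LPVA_0$ — is the hard step, and your sketch gives no mechanism for it. The paper needs two ingredients you never invoke: the standing assumption that $1,\psi_1,\ldots,\psi_{\NPSI}$ are linearly independent, exploited in Lemma~\ref{FALPV2LFT:lemma4} to build evaluation points $\mathbf{p}_i$ and affine weights $\lambda_i^l$ that separate the $\psi_l$; and Lemma~\ref{FALPV2LFT:lemma5}, which shows the FALPV Markov parameters are \emph{limits} of linear combinations (infinite series obtained by evaluating the convergent expansion \eqref{psi:series} at the points $\mathbf{p}_i$) of the LFT Markov parameters, with coefficients independent of the system — not finite combinations with coefficients that are "entries of $\LPVS$." Without some such separation argument (your "combinatorial unfolding" is only named, not performed), the implication $Y_{\mathcal{M}(\Sigma)}=Y_{\mathcal{M}(\hat{\Sigma})}\Rightarrow Y_\Sigma=Y_{\hat{\Sigma}}$ is unsubstantiated; and since your Part~\ref{th:FALPVtoLFT:part2} leans on this same correspondence to transfer rank conditions, the gap propagates there as well.
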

  That is, Part \ref{th:FALPVtoLFT:part0} of  Theorem \ref{th:FALPVtoLFT} says that if $(x,y,u,p)$ is a solution of $\Sigma$
  there exists a signal $\tilde{z}$ such that $((x^T,\tilde{z}^T),y,u)$ is a solution to the
  associated LFT and it satisfies
  \begin{equation}
  \label{sol:LFT}
  \begin{split}
    &x(t+1)=\LPVA_0 x(t)+\mathcal{H}_x w(t)+\LPVB_0u(t), \\
    &\tilde{z}(t)=\mathcal{G}_x x(t)+\mathcal{F}w(t)+\mathcal{G}_u u(t),  \\
    &y(t)=\LPVC_0x(t)+\mathcal{H}_y w(t)+\LPVD u(t),~ w(t)=\Delta_{p(t)}\tilde{z}(t) \\
  \end{split}
\end{equation}
The transformation from FALPV to LFT preserves input-output behavior, \textcolor{red}{i.e.,} LFT's 
input-output behavior for uncertainty structures from scheduling variables matches that of the original FALPV. 
Since the FALPV's state is part of the LFT state, any controller stabilizing the LFT and achieving 
specific performance will do the same for the FALPV. Part \ref{th:FALPVtoLFT:part1} 
shows that input-output equivalent FALPVs yield the same LFT behavior for all uncertainty matrices. 
Part \ref{th:FALPVtoLFT:part2} ensures that minimality is preserved. In turn,
minimal, input-output equivalent LFTs are isomorphic \cite[Theorem 8.2]{Ball2005}, 
ensuring consistent controller performance regardless of the FALPV representation used for LFT generation.
\textcolor{red}{The non-zero blocks of $\Delta$ are linear functions of the scheduling variables. 
This makes the control synthesis less conservative and more 
computationally efficient.}
  \begin{Remark}[Computing $\mathcal{M}(\Sigma)$]
\label{rem:comp}
\textcolor{red}{A minimal and stable LFT realization of $\psi$ can be computed as follows: by} \eqref{psi:series}, 
a finite collection $\textcolor{red}{\mathcal{D}}=\{\LPVS(\nu)\}_{\nu \in (\mathbb{I}_1^{\NP})^{*},|\nu| \le 2n+1}$ can be recovered 
from the Taylor-series expansion of $\psi$. From \cite{Beck2001}, $\LPVS$ is a recognizable 
formal power series, and its linear representation \cite{Beck2001} can be computed from an LFT realization of $\LPVS$. 
Applying the Ho-Kalman realization algorithm \cite[Section II.B]{Son:Real} to 
\textcolor{red}{$\mathcal{D}$}
yields a minimal linear representation of $\LPVS$,\textcolor{red}{from which a stable LFT $M$ realizing $\LPVS$ can be computed using} MR factorization
\cite{Beck2001}. 
\textcolor{red}{This procedure uses linear algebra and it is computationally efficient:
its storage and time complexity is polynomial in the size of a minimal LFT realization of $\psi$ and in the size of the FALPV $\Sigma$.}
  \end{Remark}
\begin{Remark}[Computing $\mathcal{M}(\Sigma,\psi)$ and $\mathcal{M}(\Sigma)$]
  \label{rem:comp1}

\begin{color}{blue}

Once $M$ is computed, the LFT $\widetilde{\mathcal{M}}(\Sigma,\psi)$ can be
computed using \eqref{psi_LFT1}, from which $\mathcal{M}(\Sigma,\psi)$ can be
computed using \cite[Theorem 7.1]{Ball2005}, and $M(\Sigma)$ can be constructed
using \eqref{FALPV2LFT}. In some cases $\widetilde{M}(\Sigma,\psi)$ is minimal 
and it can be taken as $\mathcal{M}(\Sigma,\psi)$, as mentioned below.
\begin{Lemma}
\label{rem:comp1:lem0}
	The LFT $\widetilde{M}(\Sigma,\psi)$ satisfies the controllability rank condition of \cite[Theorem 20]{Beck1999}.
\end{Lemma}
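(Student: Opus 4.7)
The plan is to exploit the Kronecker product structure $\widetilde{\mathcal{F}}=F \otimes I_{\NX+\NU}$ and $\widetilde{\mathcal{G}}=G \otimes I_{\NX+\NU}$ to reduce the reachability condition for $\widetilde{M}(\Sigma,\psi)$ to the reachability condition for $M$, which holds because $M$ is minimal. First I would write down the canonical partitioning $\{F_{i,j}, G_j\}_{i,j=1}^{\NP}$ of $M$ (viewed as its own LFT, with $F_{i,j} \in \mathbb{R}^{n_i \times n_j}$ and $G_j \in \mathbb{R}^{n_j \times 1}$). By the mixed-product identity $(A \otimes C)(B \otimes D) = (AB) \otimes (CD)$, the canonical partitioning of $\widetilde{M}(\Sigma,\psi)$ is immediately $\widetilde{\mathcal{F}}_{i,j} = F_{i,j} \otimes I_{\NX+\NU}$ and $\widetilde{\mathcal{G}}_j = G_j \otimes I_{\NX+\NU}$.

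The key observation is that for every word $\nu = i_1 i_2 \cdots i_{k-1} i \in (\mathbb{I}_1^{\NP})^{*}$ the corresponding block-$i$ reachability vector of $\widetilde{M}(\Sigma,\psi)$ is
\[
  \widetilde{\mathcal{F}}_{i,i_{k-1}} \widetilde{\mathcal{F}}_{i_{k-1},i_{k-2}} \cdots \widetilde{\mathcal{F}}_{i_2,i_1} \widetilde{\mathcal{G}}_{i_1}
  = \bigl( F_{i,i_{k-1}} F_{i_{k-1},i_{k-2}} \cdots F_{i_2,i_1} G_{i_1} \bigr) \otimes I_{\NX+\NU},
\]
i.e.\ the Kronecker product of the corresponding block-$i$ reachability vector of $M$ with $I_{\NX+\NU}$. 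Since $M$ is minimal, \cite[Theorem 20]{Beck1999} says the columns of the block-$i$ reachability matrix of $M$ span $\mathbb{R}^{n_i}$; pick among these columns a basis $v_1, \ldots, v_{n_i}$. The identity above places the block-columns $v_\ell \otimes I_{\NX+\NU}$, $\ell \in \mathbb{I}_1^{n_i}$, among the columns of the block-$i$ reachability matrix of $\widetilde{M}(\Sigma,\psi)$. The union of their columns is $\{v_\ell \otimes e_m : \ell \in \mathbb{I}_1^{n_i},\, m \in \mathbb{I}_1^{\NX+\NU}\}$, which forms a basis of $\mathbb{R}^{n_i(\NX+\NU)} = \mathbb{R}^{n'_i}$.

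Hence the block-$i$ reachability matrix of $\widetilde{M}(\Sigma,\psi)$ has full row rank $n'_i$ for each $i \in \mathbb{I}_1^{\NP}$, which is precisely the controllability rank condition of \cite[Theorem 20]{Beck1999}. The only real obstacle is to verify that the canonical partitioning of $\widetilde{M}(\Sigma,\psi)$ is inherited from that of $M$ through the Kronecker product; this is automatic as long as the identity factor $I_{\NX+\NU}$ is kept uniformly on the right of each Kronecker product throughout the path expansion. Everything else is the multiplicative property $\rank(A \otimes B) = \rank(A)\rank(B)$ applied block by block.
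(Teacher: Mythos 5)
Your proposal is correct and follows essentially the same route as the paper: both exploit $\widetilde{\mathcal{F}}_{i,j}=F_{i,j}\otimes \I_{\NX+\NU}$, $\widetilde{\mathcal{G}}_j=G_j\otimes \I_{\NX+\NU}$ and the mixed-product identity to get $[\widetilde{\mathcal{F}} \# \widetilde{\mathcal{G}}]_{wi}=[F \# G]_{wi}\otimes \I_{\NX+\NU}$, and then invoke minimality of $M$ together with \cite[Theorem 20]{Beck1999}. The only (cosmetic) difference is the final linear-algebra step: you exhibit a spanning set of columns $v_\ell \otimes e_m$ directly, whereas the paper argues dually, showing any left annihilator $v=\mathrm{vect}(X)$ forces $X[F\#G]_{wi}=0$ for all $w$ and hence $v=0$; both are valid.
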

\begin{Lemma}
\label{rem:comp1:lem}
 If  $\NPSI=1$ and
$\begin{bmatrix} \LPVA_1 & \LPVB_1\\\LPVC_1 & \LPVD_1\end{bmatrix}$ is full 

	column rank, then $\widetilde{\mathcal{M}}(\Sigma,\psi)$ is minimal. 
\end{Lemma}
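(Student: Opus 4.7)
The plan is to verify the observability rank condition of \cite[Theorem 20]{Beck1999} for $\widetilde{\mathcal{M}}(\Sigma,\psi)$; combined with Lemma \ref{rem:comp1:lem0} this yields minimality via the same theorem. The key ingredient is the Kronecker-product structure of the system matrices of $\widetilde{\mathcal{M}}(\Sigma,\psi)$, which reduces the rank of its observability matrix to a product involving the rank of the observability matrix of $M$ and the rank of $K:=\begin{bmatrix}\LPVA_1 & \LPVB_1\\\LPVC_1 & \LPVD_1\end{bmatrix}$.

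Let $\{H^{(i)},F_{i,j}\}_{i,j=1}^{\NP}$ denote the canonical partitioning of $M$. When $\NPSI=1$, the definition \eqref{psi_LFT1} of $\widetilde{\mathcal{M}}(\Sigma,\psi)$ yields $\widetilde{\mathcal{F}}_{i,j}=F_{i,j}\otimes I_{\NX+\NU}$ and $\widetilde{\mathcal{H}}^{(i)}=H^{(i)}\otimes K$ for $i,j\in\mathbb{I}_1^{\NP}$. By the mixed-product rule for Kronecker products, for every word $\nu=i_1\cdots i_k\in(\mathbb{I}_1^{\NP})^{*}$,
\[
\widetilde{\mathcal{H}}^{(i_k)}\widetilde{\mathcal{F}}_{i_k,i_{k-1}}\cdots\widetilde{\mathcal{F}}_{i_2,i_1}=\bigl(H^{(i_k)}F_{i_k,i_{k-1}}\cdots F_{i_2,i_1}\bigr)\otimes K.
\]
Each such row block contributes to the $i_1$-th block column of the observability matrix $\widetilde{\mathcal{O}}$ of $\widetilde{\mathcal{M}}(\Sigma,\psi)$ and is zero in all other block columns. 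After grouping the rows of $\widetilde{\mathcal{O}}$ by the first letter of $\nu$ and applying a conformal column permutation, $\widetilde{\mathcal{O}}$ takes the block-diagonal form $\mathrm{diag}[\mathcal{O}_M^{(1)}\otimes K,\ldots,\mathcal{O}_M^{(\NP)}\otimes K]$, where $\mathcal{O}_M^{(i)}$ collects the rows $H^{(i_k)}F_{i_k,i_{k-1}}\cdots F_{i_2,i_1}$ of the observability matrix of $M$ with $i_1=i$.

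Since $M$ is minimal, \cite[Theorem 20]{Beck1999} implies that each $\mathcal{O}_M^{(i)}$ has full column rank $n_i$; since $K\in\mathbb{R}^{(\NX+\NY)\times(\NX+\NU)}$ has full column rank $\NX+\NU$ by hypothesis, the identity $\mathrm{rank}(A\otimes B)=\mathrm{rank}(A)\,\mathrm{rank}(B)$ gives $\mathrm{rank}(\mathcal{O}_M^{(i)}\otimes K)=n_i(\NX+\NU)=n'_i$. Hence $\widetilde{\mathcal{O}}$ has full column rank equal to $\sum_{i=1}^{\NP}n'_i$, the state dimension of $\widetilde{\mathcal{M}}(\Sigma,\psi)$, and the observability rank condition holds. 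The main obstacle I anticipate is the bookkeeping needed to identify $\widetilde{\mathcal{O}}$ with the stated block-diagonal Kronecker form via explicit row/column permutations; this is purely technical and does not affect the rank computation, which rests on the fact that in every observability product the $K$ factor propagates out through the Kronecker structure while the state-space structure of $M$ is preserved.
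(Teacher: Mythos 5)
Your proof is correct and follows essentially the same route as the paper: reachability is taken from Lemma \ref{rem:comp1:lem0}, and the observability rank condition of \cite[Theorem 20]{Beck1999} is verified by exploiting the Kronecker factorization $[\widetilde{\mathcal{H}} \# \widetilde{\mathcal{F}}]$-products $= [H \# F]$-products $\otimes \begin{bmatrix}\LPVA_1 & \LPVB_1\\ \LPVC_1 & \LPVD_1\end{bmatrix}$ together with full column rank of that block and minimality of $M$. The only cosmetic difference is that you conclude via the rank identity $\rank(A\otimes B)=\rank(A)\,\rank(B)$ applied to the per-block observability matrices, whereas the paper runs the equivalent kernel argument through the vectorization identity for Kronecker products.
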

In other cases,
$\mathcal{M}(\Sigma,\psi)$ can be expressed via the matrices of $M$ and $\Sigma$ 
directly, as shown below.
\begin{Lemma}
\label{rem:comp1:lem1}
If $\NPSI=1$ and 
$\begin{bmatrix} \LPVA_1 & \LPVB_1\\\LPVC_1 & \LPVD_1\end{bmatrix}=LR$, where
$L \in \mathbb{R}^{(\NY+\NX) \times m}$ is full column rank, then
	\begin{align*}
		&	\mathcal{M}(\Sigma,\psi)=(\NX+\NY,\NX+\NU, \NP, \{m n^{'}_i\}_{i=1}^{\NP},\mathcal{F},\mathcal{G},\mathcal{H},0) \\ 
	& \mathcal{F}=F \otimes I_{m}, \quad
  \mathcal{G}=G \otimes R, 
  \nonumber  \\
 &  \widetilde{\mathcal{H}}=H \otimes L
  \quad
  \forall i \in \inrange{1}{\NP}:  n'_i=n_i(\NX+\NU) \nonumber
\end{align*}
In particular, if $\LPVB_1=0,\LPVD_1=0,\LPVC_1=0$, $L=\begin{bmatrix} \LPVA \\ \mathbf{O}_{\NY \times \NX} \end{bmatrix}$, 
	$R=\begin{bmatrix} \mathbf{I}_{\NX} & \mathbf{O}_{\NX  \times \NU} \end{bmatrix}$.
\end{Lemma}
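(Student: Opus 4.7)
The plan is to verify directly that the tuple
$\mathcal{M}_{\star} := (\NX+\NY, \NX+\NU, \NP, \{m n_i\}_{i=1}^{\NP}, F\otimes I_m, G\otimes R, H\otimes L, 0)$
is a stable and minimal LFT realization of $\tilde{\LPVS}_{\Sigma}$; then by uniqueness of minimal LFT realizations up to isomorphism \cite[Theorem 8.2]{Ball2005}, $\mathcal{M}_{\star}$ coincides (up to a change of basis) with the $\mathcal{M}(\Sigma,\psi)$ produced by Lemma \ref{lem:rational}, which yields the claimed formulae.

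First, I would check the realization property. Letting $\{H_i, F_{i,j}, G_j\}$ be the canonical partition of $M$, the canonical partition of $\mathcal{M}_{\star}$ is $\{H_i\otimes L,\ F_{i,j}\otimes I_m,\ G_j\otimes R\}$. By the Kronecker mixed-product identity $(A\otimes B)(C\otimes D)=(AC)\otimes (BD)$, and using that $\LPVS(\nu)$ is a scalar because $\NPSI=1$,
\begin{equation*}
Y_{\mathcal{M}_{\star}}(i_1\cdots i_k) = (H_{i_k}F_{i_k,i_{k-1}}\cdots F_{i_2,i_1}G_{i_1}) \otimes (LR) = \LPVS(i_1\cdots i_k)\cdot LR,
\end{equation*}
which equals $\tilde{\LPVS}_{\Sigma}(i_1\cdots i_k)$ since $LR = \begin{bmatrix}\LPVA_1 & \LPVB_1\\ \LPVC_1 & \LPVD_1\end{bmatrix}$ and $\NPSI=1$; the empty word is handled by the fact that both feedthroughs vanish.

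For stability, let $P=\Diag[P_1,\ldots,P_{\NP}]\succ 0$ witness stability of $M$, so $F^T P F - P \prec 0$. Taking $\tilde{P}:=P\otimes I_m=\Diag[P_1\otimes I_m,\ldots,P_{\NP}\otimes I_m]$, which has the block structure required by \cite[Theorem 4]{Beck1999}, I get $(F\otimes I_m)^T\tilde{P}(F\otimes I_m)-\tilde{P}=(F^T P F - P)\otimes I_m \prec 0$, establishing stability of $\mathcal{M}_{\star}$.

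The main obstacle is minimality, which I would establish through the reachability and observability rank conditions of \cite[Theorem 20]{Beck1999}. The Kronecker structure forces the block reachability / observability matrices of $\mathcal{M}_{\star}$ to factor as Kronecker products of the corresponding matrices of $M$ with $R$ and $L$, respectively; concretely, the reachable subspace of the $i$-th block is $\mathcal{R}_i(F,G)\otimes \mathrm{Col}(R)$ and the observable co-kernel of the $i$-th block dually involves $\mathrm{Ker}(L)$. Since $M$ is minimal, its reachability and observability matrices have full rank; combined with the full column rank of $L$ and the full row rank of $R$ (the latter may be assumed without loss of generality by replacing $(L,R)$ with a rank factorization of $LR$, leaving the hypotheses intact), this yields the required rank conditions for $\mathcal{M}_{\star}$. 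Finally, for the concluding assertion, direct substitution of $L=\begin{bmatrix}\LPVA_1\\ \mathbf{O}_{\NY\times\NX}\end{bmatrix}$ and $R=\begin{bmatrix}\mathbf{I}_{\NX}&\mathbf{O}_{\NX\times\NU}\end{bmatrix}$ gives $LR=\begin{bmatrix}\LPVA_1 & \mathbf{O}\\ \mathbf{O} & \mathbf{O}\end{bmatrix}$, matching $\begin{bmatrix}\LPVA_1 & \LPVB_1\\ \LPVC_1 & \LPVD_1\end{bmatrix}$ under the vanishing hypothesis, with $L$ inheriting full column rank from $\LPVA_1$.
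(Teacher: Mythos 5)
Your proposal is correct and follows essentially the same route as the paper's own proof: verify via the Kronecker mixed-product identity that the candidate $(F\otimes I_m,\ G\otimes R,\ H\otimes L)$ realizes $\tilde{\LPVS}_{\Sigma}$ (using that $\LPVS(\nu)$ is scalar when $\NPSI=1$), and then establish the reachability/observability rank conditions of \cite[Theorem 20]{Beck1999} from minimality of $M$ together with the full column rank of $L$ and full row rank of $R$ -- the paper does this by a vectorization computation, you by the equivalent column-space/kernel factorization of Kronecker products, and, like you, the paper also implicitly needs $R$ full row rank beyond the stated hypothesis. Your extra steps (checking stability of the candidate and invoking \cite[Theorem 8.2]{Ball2005} to identify it with $\mathcal{M}(\Sigma,\psi)$ up to isomorphism) are harmless refinements of what the paper leaves implicit.
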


\end{color}
  \end{Remark}

\begin{color}{red}
  \begin{Example}[Polynomial dependence]
    \label{ex:polynom}
      Assume that $\Sigma$ from \eqref{equ:FALPVSystem}
      is such that $\NPSI=2$, $\LPVB_1=\LPVB_2=0$, $\LPVC_1=\LPVC_2=0$, 
        $\LPVA_1,\LPVA_2$ are non-singular, 
      $\NP=1$, $\psi_1(\mathbf{p})=\mathbf{p}$ and $\psi_2(\mathbf{p})=\mathbf{p}^2$.
    The LFT $M=(2,1,2,F,G,H,0)$ with 
    $F=\begin{bmatrix} 0 & 0 \\ 1 & 0 \end{bmatrix}$, $G=\begin{bmatrix} 1 & 0 \end{bmatrix}^T$, 
    $H=I_2$ is realization of $\psi(\mathbf{p})=(\psi_1(\mathbf{p}),\psi_2(\mathbf{p}))^T$, it satisfies the
    controllability and observability rank conditions \cite[Theorem 20]{Beck1999}.
    and it is stable ($F^TF-\I=-\I < 0$), and can be computed using Remark \ref{rem:comp}.
    
    Indeed, let us verify that $M$ is a realization of $\psi$. In this case,
    $\NP=1$, $n_2=1$, $\Delta_{\mathbf{p}}=\mathbf{p}I_2$. Hence,
    $$I-F\Delta_p=\begin{bmatrix} 1 & 0 \\ -\mathbf{p} & 1 \end{bmatrix}, $$
     and hence $(I-F\Delta_p)^{-1}=\begin{bmatrix} 1 & 0 \\ \mathbf{p} & 1 \end{bmatrix}$, and 
    therefore 
    \begin{align*}
       & H\Delta_{\mathbf{p}}(I-F\Delta_p)^{-1}G=\begin{bmatrix} \mathbf{p} & 0 \\ 0 & \mathbf{p} \end{bmatrix}\begin{bmatrix} 1 & 0 \\ \mathbf{p} & 1 \end{bmatrix}
         \begin{bmatrix} 1 \\ 0 \end{bmatrix} \\
       &= \begin{bmatrix} \mathbf{p} \\ \mathbf{p}^2 \end{bmatrix}=\psi(\mathbf{p})  
    \end{align*}
    Since the uncertainity block of $M$ contains only $1$ diagonal block, $H_1=H,F_{11}=F,G_1=G$ is the canonical partioning of $M$.
    Then the controllability and  observability matrix of $M$, as defined in \cite{Beck1999}, is given 
    \begin{align*}
     & \begin{bmatrix} G_1 &  F_{11}G_1 & \cdots  \end{bmatrix}=\begin{bmatrix} 1 & 0 & \cdots \\ 0 & 1 & \cdots \end{bmatrix} \\
      & \begin{bmatrix} H_1 \\ H_1F_{11} \\ \vdots \end{bmatrix}=
      \begin{bmatrix} 1 & 0 \\ 0 & 1 \\ 0 & 0 \\ 1  & 0 \\
      \vdots 
      & \vdots \end{bmatrix}
    \end{align*}
  and hence both are full row and column rank respectively. 
    In this case, the formal input-output map of $M$ is defined over $\mathbb{I}_1^1=\{1\}$
       $\mathcal{S}(\epsilon)=0$, and 
    \begin{align*}
       \mathcal{S}(\underbrace{1\cdots 1}_{k-times})=CF^{k-1}G=\left\{\begin{array}{rl}
           \begin{bmatrix} 1 \\ 0 \end{bmatrix} & k=1,\\
           & \\
           \begin{bmatrix} 0 \\ 1 \end{bmatrix} & k=2,\\
           & \\ 
          \begin{bmatrix} 0 \\ 0 \end{bmatrix} & k> 2
       \end{array}\right.
    \end{align*}
     In particular, the first component $\mathcal{S}^1$ of $\mathcal{S}$ is such that
     \begin{align*}
      & \mathcal{S}^1(\epsilon)=0, ~ \mathcal{S}^1(1)=1, ~\\
      &  \mathcal{S}^1(\upsilon)=0 \text{ for } \upsilon \in (\mathbb{I}_1^1)^{*}, |\upsilon|>1
     \end{align*}
     and the second component $\mathcal{S}^2$ of $\mathcal{S}$ is such that
      \begin{align*}
        & \mathcal{S}^2(\epsilon)=0, ~ \mathcal{S}^2(1)=0, ~ \mathcal{S}^2(11)=1,  \\
        & ~ \mathcal{S}^2(\upsilon)=0  \text{ for } \upsilon \in (\mathbb{I}_1^1)^{*}, |\upsilon| > 2
      \end{align*} 
     
    It then follows that
    \begin{align*}
      &  \tilde{S}_{\Sigma}(\epsilon)=\begin{bmatrix} \LPVA_1 & \LPVB_1 \\ \LPVC_1 & \LPVD_1 \end{bmatrix} \mathcal{S}^1(\epsilon)
      + \begin{bmatrix} \LPVA_2 & \LPVB_2 \\ \LPVC_2 & \LPVD_2 \end{bmatrix} \mathcal{S}^2(\epsilon)=0 \\
      & \tilde{S}_{\Sigma}(1)=\begin{bmatrix} \LPVA_1 & \LPVB_1 \\ \LPVC_1 & \LPVD_1 \end{bmatrix} \mathcal{S}^1(1)+
      \begin{bmatrix} \LPVA_2 & \LPVB_2 \\ \LPVC_2 & \LPVD_2 \end{bmatrix} \mathcal{S}^2(1)=\\
      & =\begin{bmatrix} \LPVA_1 & \LPVB_1 \\ \LPVC_1 & \LPVD_1 \end{bmatrix} = \begin{bmatrix} \LPVA_1 & 0 \\ 0 & 0 \end{bmatrix} \\
      & \tilde{S}_{\Sigma}(11)=\begin{bmatrix} \LPVA_1 & \LPVB_1 \\ \LPVC_1 & \LPVD_1 \end{bmatrix} \mathcal{S}^1(11)+
      \begin{bmatrix} \LPVA_2 & \LPVB_2 \\ \LPVC_2 & \LPVD_2 \end{bmatrix} \mathcal{S}^2(11)=\\
      & =\begin{bmatrix} \LPVA_2 & \LPVB_2 \\ \LPVC_2 & \LPVD_2 \end{bmatrix}=\begin{bmatrix} \LPVA_2 & 0 \\ 0 & 0 \end{bmatrix}  \\
      & \tilde{S}_{\Sigma}(\upsilon)=\begin{bmatrix} \LPVA_1 & \LPVB_1 \\ \LPVC_1 & \LPVD_1 \end{bmatrix} \mathcal{S}^1(\upsilon)+
      \begin{bmatrix} \LPVA_2 & \LPVB_2 \\ \LPVC_2 & \LPVD_2 \end{bmatrix} \mathcal{S}^2(\upsilon)=0 \\ 
      & \text{ for } |\upsilon|>2
    \end{align*}
    It then follows that $\widetilde{\mathcal{M}}(\Sigma,\psi)$ is such that 
    \begin{align*}
     &    \widetilde{\mathcal{F}}=\begin{bmatrix} 0 & 0 \\ 1 & 0 \end{bmatrix} \otimes \I_{\NX+\NU}
          = \begin{bmatrix} O_{\NX+\NU}  & O_{\NX+\NU} \\ \I_{\NX+\NU} & O_{\NX+\NU,\NX+\NU} \end{bmatrix}, \\
    & \widetilde{\mathcal{G}}=\begin{bmatrix} 1 \\ 0 \end{bmatrix} \otimes \I_{\NX}=
    \begin{bmatrix} \I_{\NX+\NU} \\ O_{\NX+\NU,\NX+\NU}  \end{bmatrix} \\ 
    & \widetilde{\mathcal{H}}=
       \begin{bmatrix} 1 &  0 \end{bmatrix} \otimes \begin{bmatrix} \LPVA_1 & \LPVB_1 \\ \LPVC_1 & \LPVD_1 \end{bmatrix} 
         + \begin{bmatrix} 0 & 1 \end{bmatrix} \otimes \begin{bmatrix} \LPVA_2 & \LPVB_2 \\ \LPVC_2 & \LPVD_2 \end{bmatrix} \\
       & =
       \begin{bmatrix} 1 &  0 \end{bmatrix} \otimes \begin{bmatrix} \LPVA_1 & O_{\NX,\NU} \\ O_{\NY,\NX} & O_{\NY,\NU} \end{bmatrix} \\ 
        & + \begin{bmatrix} 0 & 1 \end{bmatrix} \otimes \begin{bmatrix} \LPVA_2 & O_{\NX,\NU} \\ O_{\NY,\NX} & O_{\NY,\NU} \end{bmatrix} \\
            & = \begin{bmatrix} \LPVA_1 & O_{\NX,\NU} & O_{\NX,\NX} & O_{\NX,\NU} \\ 
            O_{\NY,\NX} & O_{\NY,\NU} & O_{\NY,\NX} & O_{\NY,\NU} \end{bmatrix} + \\ 
            & \begin{bmatrix} O_{\NX,\NX}  & O_{\NX,\NU} & \LPVA_2 & O_{\NX,\NU}  \\ 
         O_{\NY,\NX} & O_{\NY,\NU} & O_{\NY,\NX} & O_{\NY,\NU}  
            \end{bmatrix}  \\
        & =\begin{bmatrix} \LPVA_1 & O_{\NX,\NU} & \LPVA_2 &  O_{\NX,\NU} \\ 
           O_{\NY,\NX} & O_{\NY,\NU} & O_{\NY,\NX} & O_{\NY,\NU}  \end{bmatrix}
  \end{align*}
  where $O_{k,l}$ is the $k \times l$ zero matrix.
  For the LFT $\widetilde{M}(\Sigma,\psi)$, we have that
  its canonical partitioning is given by $\widetilde{F}_{11}=\widetilde{F}$,
  $\widetilde{H}_1=\widetilde{H}$, and $\widetilde{G}_1=\widetilde{G}$, as
  the uncertainty block $\Delta_{\mathbf{p}}$ has one diagonal block. 
  Notice that $\widetilde{F}_{11}^2=\widetilde{F}^2=0$ and 
  \begin{align*}
    \widetilde{F}_{11}\widetilde{G}_{1}=\widetilde{F}\widetilde{G}=\begin{bmatrix} 
    O_{\NX+\NU,\NX+\NU} \\ \I_{\NX+\NU,\NX+\NU} \end{bmatrix}
  \end{align*}
  In this case, the controllability and observability matrices of $\widetilde{M}(\Sigma,\psi)$ are given by
  \begin{align*}
    & R=\begin{bmatrix} \widetilde{G}_1 & \widetilde{F}_{11}\widetilde{G}_1 & \cdots \end{bmatrix} \\
      & =\begin{bmatrix} \I_{\NX+\NU} &  O_{\NX+\NU,\NX+\NU} & \cdots  \\ O_{\NX+\NU,\NX+\NU} & \I_{\NX+\NU} & \cdots \end{bmatrix}=\\
          & = \begin{bmatrix} \I_{2(\NX+\NU)} & \cdots \end{bmatrix} \\
      & O=\begin{bmatrix} \widetilde{H}_1 \\ \widetilde{H}_1\widetilde{F}_{11} \end{bmatrix}=
        \begin{bmatrix} \widetilde{H} \\ \widetilde{H}\widetilde{F} \end{bmatrix}= \\
      & =\begin{bmatrix} \LPVA_1 & O_{\NX,\NU} & O_{\NX,\NX} & O_{\NX,\NY} \\
        O_{\NY,\NX} & O_{\NX,\NU} & O_{\NU,\NX} & O_{\NX,\NU} \\ 
          \LPVA_2 & O_{\NX,\NU} & O_{\NX,\NU} & O_{\NX,\NU} \\ 
               O_{\NY,\NX} & O_{\NX,\NU} & O_{\NX,\NX} & O_{\NY,\NU}  \\
               \vdots & \vdots & \vdots & \vdots \\
            \end{bmatrix}  
  \end{align*}  
   It then follows that $R$ is full row rank. Since $\LPVA_1,\LPVA_2$ are non-singular, it follows that
   \[ \ker O= \mathrm{Im}  \begin{bmatrix} O_{\NX,\NX} \\ \I_{\NU} \\ O_{\NX,\NX} \\ \I_{\NU} \end{bmatrix} \]
   Consider the basis transformation
   \[ T=\begin{bmatrix} \I_{\NX} & O_{\NX,\NU} & O_{\NX,\NX} & O_{\NX,\NU} \\
                         O_{\NX,\NX}       & O_{\NX,\NU}  & \I_{\NX} & O_{\NX,\NU} \\
                        O_{\NU,\NX}        & \I_{\NU} & O_{\NU,\NX} & O_{\NU,\NU} \\
                        O_{\NU,\NX}       & O_{\NU,\NU}      &  O_{\NU,\NX} & \I_{\NU}
                        \end{bmatrix}
    \]
    By direct computation it follows that  $T*T=I_{2(\NX+\NU)}$ and hence $T=T^{-1}$.
    Then the transformed LFT with $T\widetilde{F}T^{-1}$, $T\widetilde{G}$, $\widetilde{H}T^{-1}$
    is in Kalman-decomposition form \cite[Theorem 7.1]{Ball2005} and is of the form
    \begin{align*}
          & T\widetilde{F}T^{-1}=              
          \begin{bmatrix} O_{\NX,\NX} O_{\NX,\NX} & O_{\NX,\NU}  & O_{\NX,\NU} \\ 
                        \I_{\NX,\NX} & O_{\NX,\NX} & O_{\NX,\NU} & O_{\NX,\NU} \\
                        O_{\NU,\NX} & O_{\NU,\NX} & O_{\NU,\NU} & O_{\NU,\NU} \\
                        O_{\NU,\NX} & O_{\NU,\NX} & \I_{\NU} & O_{\NU,\NU} \\
                      \end{bmatrix}, \\
          & T\widetilde{G}=\begin{bmatrix} \I_{\NX}  & 0_{\NX,\NU} 
                                   \\ O_{\NX,\NX} & O_{\NX,\NU} \\ 
                                   O_{\NU,\NX} & \I_{\NU,\NU} \\
                                    O_{\NU,\NX} & O_{\NU,\NU}
                                  \end{bmatrix}, \\            
          & \widetilde{H}T^{-1}=\begin{bmatrix} \LPVA_1 & \LPVA_2 & O_{\NX,\NU} & O_{\NX,\NU} \\ 
                        O_{\NY,\NX} & O_{\NY,\NX} & O_{\NY,\NU} & O_{\NY,\NU} 
                      \end{bmatrix}
    \end{align*}        
     The the matrices $\mathcal{F},\mathcal{G},\mathcal{H}$, given by
     \begin{align*}
        & \mathcal{F}=\begin{bmatrix} O_{\NX,\NX} & O_{\NX,\NX} \\
                                    \I_{\NX} & O_{\NX,\NX} 
        \end{bmatrix},  \\
        & \mathcal{G}=\begin{bmatrix} \I_{\NX}  &  O_{\NX,\NU} \\ 
                      O_{\NX,\NX} & O_{\NX,\NU} \end{bmatrix}, \\
        & \mathcal{H}=
        \begin{bmatrix} \LPVA_1 & \LPVA_2 \\
                                    O_{\NY,\NX} & O_{\NY,\NX} 
        \end{bmatrix} 
    \end{align*}
    correspod to the minimal subsystem of the Kalman decomposition of 
    $\widetilde{M}(\Sigma,\psi)$. That is,
    we can take $\mathcal{M}(\Sigma,\psi)$ as in \eqref{psi_LFT}, 
    with the above choide of matrices $\mathcal{F},\mathcal{G},\mathcal{H}$. 
   
    In fact, a direct computation reveals that
    \begin{align*}
      &\sum_{l=1}^{2}\begin{bmatrix}\LPVA_l & \LPVB_l\\\LPVC_l & \LPVD_l\end{bmatrix}\
   \psi_l (\mathbf{p})
   = \begin{bmatrix} \LPVA_1 \mathbf{p}+\LPVA_2 \mathbf{p}^2 & O_{\NX,\NU}\\
    O_{\NY,\NX} & O_{\NY,\NU} \end{bmatrix} \\
    & = \mathcal{H} \Delta_{\mathbf{p}}(I-\mathcal{F}\Delta_{\mathbf{p}})^{-1}\mathcal{G}= \\
    & \begin{bmatrix} \LPVA_1\mathbf{p} & \LPVA_2\mathbf{p} \\ O_{\NY,\NX} & O_{\NY,\NU} \end{bmatrix}
      (\I_{2\NX,}-\begin{bmatrix} O_{\NX,\NX} & O_{\NX,\NU} \\ \mathbf{p}\I_{\NX,\NX} & O_{\NX,\NU} \end{bmatrix})^{-1}
       \\
      & \times \begin{bmatrix} \I_{\NX} & O_{\NX,\NU}\\ O_{\NX,\NX} & O_{\NX,\NU} \end{bmatrix} \\
    & = 
    \begin{bmatrix} \LPVA_1\mathbf{p} & \LPVA_2\mathbf{p} \\ O_{\NY,\NX} & O_{\NY,\NU} \end{bmatrix}
      \begin{bmatrix} \I_{\NX} & O_{\NX,\NU}\\ -\mathbf{p}\I_{\NX,\NX} & I_{\NX} \end{bmatrix}^{-1}
      \\
     &  \times \begin{bmatrix} \I_{\NX} & O_{\NX,\NU}\\ O_{\NX,\NX} & O_{\NX,\NU} \end{bmatrix} \\
     &  
    =\begin{bmatrix} \LPVA_1\mathbf{p} & \LPVA_2\mathbf{p} \\ O_{\NY,\NX} & O_{\NY,\NU} \end{bmatrix}
      \begin{bmatrix} \I_{\NX} & O_{\NX,\NU}\\ \mathbf{p}\I_{\NX,\NX} & I_{\NX} \end{bmatrix} \\
      & \times \begin{bmatrix} \I_{\NX} & O_{\NX,\NU}\\ O_{\NX,\NX} & O_{\NX,\NU} \end{bmatrix} \\
    \end{align*}
    i.e., \eqref{eq:SumMatricesPsi} holds.
    Moreover,  the controllability and observability matrices of $\mathcal{M}(\Sigma,\psi)$ are given by
    \begin{align*}
       & R=\begin{bmatrix} \mathcal{G} & \mathcal{F}\mathcal{G} & \cdots \end{bmatrix}=
           \begin{bmatrix} \I_{\NX} & O_{\NX,\NX} \\ O_{\NX,\NX} & \I_{\NX} \end{bmatrix} \\
       & O=\begin{bmatrix} \mathcal{H} \\ \mathcal{H}\mathcal{F} \end{bmatrix}=
          \begin{bmatrix} \LPVA_1 & \LPVA_2 \\ 
                        O_{\NY,\NX} & O_{\NY,\NX} \\
                        \LPVA_2 & O_{\NX,\NX} \\
                        O_{\NY,\NX} & O_{\NY,\NX} \\
                        \vdots  & \vdots 
        \end{bmatrix} 
      \end{align*}
      and they are full row and column rank respectively, as $\LPVA_1,\LPVA_2$ are non-singular. Hence,
      by \cite{Ball2005}, the LFT $\mathcal{M}(\Sigma,\psi)$ is minimal.
    Then 
    \begin{align*}
      & \mathcal{G}_x=\begin{bmatrix} I_{\NX} \\ O_{\NX,\NX}  \end{bmatrix}, ~
      \mathcal{G}_u=O_{2\NX,\NU} \\
      & \mathcal{H}_x=\begin{bmatrix} \LPVA_1 & \LPVA_2 \end{bmatrix}, ~
      \mathcal{H}_y=O_{\NY,2\NX}
    \end{align*}
    Then $\mathcal{M}(\Sigma)$ is of the form \eqref{FALPV2LFT}, with  $\LPVD_0=O_{\NY,\NU}$,
    \begin{align*}
    & \LFTA=\begin{bmatrix} \LPVA_0 & \LPVA_1 & \LPVA_2  \\ 
                          \I_{\NX} & O_{\NX,\NX}     & O_{\NX,\NX}   &  \\
                          O_{\NX,\NX}      & I_{\NX}    & O_{\NX,\NX}   & 
          \end{bmatrix}, ~  \LFTB=\begin{bmatrix} \LPVB_0 \\ O_{\NX,\NU} \\ O_{\NX,\NU}\end{bmatrix}, \\
          & \LFTC=\begin{bmatrix} \LPVC_0 & O_{\NY,\NX} &  O_{\NY,\NX} \end{bmatrix}, 
    \end{align*}
and the uncertainity block is $\Delta(p)=p(t)I_{2\NX}$. 
That is, there is one diagonal block which dependens linearly on $p$.
A direct substitution shows that  any solution $(x,y,u,p)$ of $\Sigma$ satisfies \eqref{sol:LFT} 
with $\tilde{z}=(x^T,x^Tp)^T$.

    \end{Example}

\begin{Example}[Rational dependence]\label{ex:FALPVtoDiagDzw}
Consider the FALPV $\Sigma$ of the form \eqref{equ:FALPVSystem}, such that 
$\NP=1$, $\NPSI=1$, $\LPVA_0=0$, $\LPVB_1=0$, $\LPVC_1=0$, $\LPVA_1$ non-singular, and 
$\psi(\mathbf{p})=\frac{\mathbf{p}}{1-0.5\mathbf{p}}$.
Then  $M=(1,1,1,F,G,H)$ with $F=0.5$, $H=G=1$ is stable and minimal LFT realizing $\psi$, and it 
can be computed from $\psi$ by using Remark \ref{rem:comp}. 
Indeed, in thise case $\Delta_{\mathbf{p}}=\mathbf{p}$ and 
\[ 
        H\Delta_{\mathbf{p}}(I-F\Delta_{\mathbf{p}})^{-1}G=\mathbf{p}(1-0.5\mathbf{p})^{-1}=\psi(\mathbf{p}).
\]
The dimension of $M$ is $1$, i.e., it is minimal.

The formal input-output map $\mathcal{S}$ of $M$ is defined over the set of sequences
$(\mathbb{I}_1^1)^{*}$,  $\mathbb{I}_1^1=\{1\}$, i.e., all sequences of $1$s. 
\begin{align*}
    \mathcal{S}(\epsilon)=0, ~ \mathcal{S}(\underbrace{1\cdots 1}_{k \mbox{ times }})=0.5^{k-1}
\end{align*}
The map $\tilde{S}_{\Sigma}$ is then defined as
\begin{align*}
  \tilde{S}_{\Sigma}(\upsilon)=\begin{bmatrix} \LPVA_1 & O_{\NX,\NU} \\ O_{\NY,\NX} & O_{\NY,\NU} \end{bmatrix} 0.5^{|\upsilon|-1}, 
  |\upsilon|>1, \text{ and } \tilde{S}_{\Sigma}(\epsilon)=0
\end{align*}
where $O_{k,l}$ is the $k \times l$ zero matrix.
Then an LFT $\widetilde{\mathcal{M}}(\Sigma,\psi)$
realizing $\tilde{S}_{\Sigma}$ is given by
\begin{align*}
&   \widetilde{\mathcal{F}} = F \otimes \I_{\NX+\NU}=0.5\I_{\NX+\NU} \\
&    \widetilde{\mathcal{G}}=G \otimes \I_{\NX+\NU}=\I_{\NX+\NU}, \\
 &  \widetilde{\mathcal{H}}=H \otimes \begin{bmatrix} \LPVA_1 & O_{\NX,\NU}\\ O_{\NY,\NX} & O_{\NY,\NU} \end{bmatrix}=
   \begin{bmatrix} \LPVA_1 & O_{\NX,\NU} \\ O_{\NY,\NX} & O_{\NY,\NU} \end{bmatrix}.
\end{align*}
It is easy to see that the LFT $\widetilde{\mathcal{M}}(\Sigma,\psi)$ is stable and minimal.
Indeed, $\widetilde{\mathcal{F}}^T\widetilde{\mathcal{F}}=0.5^2\I_{\NX+\NU} < \I_{\NX+\NU}$
and the controllability and observability matrices are given by
\begin{align*}
    & R=\begin{bmatrix} \widetilde{\mathcal{G}} & \cdots \end{bmatrix}=\begin{bmatrix} \I_{\NX+\NU} & \cdots \end{bmatrix}, \\
    & O=\begin{bmatrix} \widetilde{\mathcal{H}} \\ \vdots \\ \widetilde{\mathcal{H}}\widetilde{\mathcal{F}}^k \\ \vdots  \end{bmatrix}=
    \begin{bmatrix} \LPVA_1 & O_{\NX,\NU} \\ O_{\NY,\NX} & O_{\NY,\NU}  \\ \vdots & \vdots \\ \LPVA_1 0.5^k & O_{\NY,\NU} \\ \vdots &  \vdots  \end{bmatrix} 
\end{align*}
That is the controllability matrix is full row rank, and the last $\NU$ colums of the
 observability matrix is zero, and the first $\NX$ columns are linearly independent, as
 $\LPVA_1$ is non-singular.
 Then the LFT $\widetilde{\mathcal{M}}(\Sigma,\psi)$ is already in Kalman-decomposition form \cite[Theorem 7.1]{Ball2005}.
 and the minimal subsystem is given by first $\NX$ states of the LFT, i.e., if we define
 \begin{align*}
  \mathcal{F}=0.5\I_{\NX}, ~ \mathcal{G}=\begin{bmatrix} \I_{\NX} \\ O_{\NX,\NU} \end{bmatrix}, ~
   \mathcal{H}=\begin{bmatrix} \LPVA_1 \\ O_{\NY,\NX} \end{bmatrix} 
 \end{align*}
 In fact, by a direct computation, we have that
 for any $\upsilon \in (\mathbb{I}_1^1)^{*}$,
 $\tilde{S}_{\Sigma}(\upsilon)=\mathcal{H} \mathcal{G}=O_{\NX+NY,\NX+\NU}$,  and 
 if $|\upsilon|>1$, then
 \begin{align*}
  & \tilde{S}_{\Sigma}(\upsilon)=\begin{bmatrix} \LPVA_1 & O_{\NX,\NU} \\ O_{\NY,\NX} & O_{\NY,\NU} 
   \end{bmatrix} 0.5^{|\upsilon|-1} \\
  & = \begin{bmatrix} \LPVA_1 & O_{\NX,\NU} \\ O_{\NY,\NX} & O_{\NY,\NU} \end{bmatrix}  
  (0.5 \I_{\NX})^{|\upsilon|-1}\begin{bmatrix} \I_{\NX} \\ O_{\NX,\NU} \end{bmatrix}=
  \mathcal{H} \mathcal{F}^{|\upsilon|-1} \mathcal{G}. 
 \end{align*}
 i.e., $\mathcal{M}(\Sigma,\psi)$ is indeed a realization of $\tilde{S}_{\Sigma}$.
 Moreover, clearly the controllability and observability matrices of $\mathcal{M}(\Sigma,\psi)$ are of the form
 \begin{align*}
    & R=\begin{bmatrix} \mathcal{G} & \cdots \end{bmatrix}=\begin{bmatrix} \I_{\NX} & O_{\NX,\NU} & \cdots \end{bmatrix} \\
    & O=\begin{bmatrix} \mathcal{H} \\ \vdots \end{bmatrix}=\begin{bmatrix} \LPVA_1 \\ \vdots \end{bmatrix}
 \end{align*}
and they are   full row and column rank respectively, as $\LPVA_1$ is non-singular.
That is, $\mathcal{M}(\Sigma,\psi)$ is minimal.
It then follows that
\begin{align*}
    & \mathcal{G}_x=I_{\NX} ~
    \mathcal{G}_u=O_{\NX,\NU} \\
    & \mathcal{H}_x=\LPVA_1, \mathcal{H}_y=O_{\NY,\NX}
\end{align*}
 Then  $\mathcal{M}(\Sigma)$ is given by
 \begin{align*}
   & \LFTA=\begin{bmatrix} O_{\NX,\NX} &  \LPVA_1 \\ \I_{\NX} & 0.5I_{\NX} \end{bmatrix} \\
   & \LFTB=\begin{bmatrix} \LPVB_0 \\ O_{\NX,\NU} \end{bmatrix} \\
   & \LFTC=\begin{bmatrix} \LPVC_0 &  O_{\NY,\NX} \end{bmatrix}, ~ \LPVD_0=O_{\NY,\NU}
 \end{align*}
 and the uncertainity block is $\Delta(p)=p(t)I_{\NX}$.
Then any solution $(x,y,u,p)$ of $\Sigma$ satisfies \eqref{sol:LFT} with $\tilde{z}= \frac{x}{1-0.5p}$.
\end{Example}
\end{color}
\begin{color}{blue}
\begin{Remark}[Comparison with existing work]
  \label{rem:related}
FALPVs with recognizable non-linear functions $\psi$ are special cases of LPV 
systems with rational dependence on scheduling variables. Existing methods, 
e.g., \cite{scherer2000linear,Hecker01112006,Zhou1996}, construct LFTs by 
processing rational entries of LPV matrices symbolically and minimizing the 
resulting LFTs. However, these methods may yield LFTs of varying sizes and 
formal input-output behavior for the same FALPV system, as noted in 
\cite[Section 4]{Hecker01112006}. 
\begin{Example}[Non-equivalent LFTs of different dimensions]
\label{example2}
For example, consider a FALPV of the form \eqref{equ:FALPVSystem}, such that 
$\NP=2$, $\NPSI=1$, $\LPVA_0=0$, $\LPVB_1=0$, $\LPVC_1=0$ and 
\begin{align*}
 	\LPVA_0=\begin{bmatrix} 1 & 0 \\ 0 & 0 \end{bmatrix}, \quad \LPVA_1\begin{bmatrix} 1 & 0 \\ 2 & 1 \end{bmatrix} \\
		\LPVB_0=\begin{bmatrix} 0 & 1 \end{bmatrix}^T, \quad \LPVC_0=\begin{bmatrix} 1 & 0 \end{bmatrix}, \quad \LPVD_0=0	
\end{align*}
Moreover, 
\[ \psi(\mathbf{p})=3\mathbf{p}_1+(3-4\mathbf{p}_2+(-4+\mathbf{p}_2+\mathbf{p}_1)\mathbf{p}_1)\mathbf{p}_1. \]
Applying standard methods of the Matlab GSS package \url{https://w3.onera.fr/smac/gss} yields an LFT of dimension $10$.
If we replace the FALPV by the isomorphic FALPV with matrices by choosing $T=\begin{bmatrix} 1 & 2 \\ 3 & 4 \end{bmatrix}$
and replacing $\LPVA_i,\LPVB_i, \LPVC_i$ by $T \LPVA_i T^{-1}, T\LPVB_i,\LPVC_iT^{-1}$ for all $i=0,1$ and we use the following
alternative equivalent representation of $\psi$
\[ \psi(\mathbf{p})= \mathbf{p}_1^3+\mathbf{p}_2*\mathbf{p}_1^2-4*\mathbf{p1}^2-4*\mathbf{p}_1*\mathbf{p}_2+3*\mathbf{p}_1+3*\mathbf{p}_2 \]
and we apply again  Matlab GSS package we obtain an LFT of dimension $12$. Moreover, by computing the difference between the LFTs obtained for these two systems
using the same package, we can see that these two LFTs are not formally input-output equivalent. 
\end{Example}
This 
discrepancy arises from differences in symbolic processing, e.g., treating 
$\mathbf{p}_1\mathbf{p}_2$ and $\mathbf{p}_2\mathbf{p}_1$ differently. 
\begin{Example}
\label{example3}
	Let us apply the GLSS toolbox to the first FALPV from Example \ref{example2} for $\psi(\mathbf{p})=\mathbf{p}_1\mathbf{p}_2$ and then to the second one, but
with $\psi(\mathbf{p})$ written as $\psi(\mathbf{p})=\mathbf{p}_2\mathbf{p}_1$. The resulting two LFTs will have the same dimension $4$, but their difference
is not zero, i.e., they are not formally input-output equivalent. This can also be seen by analyzing the two LFTs. The first LFT is of the form
\eqref{FALPV2LFT} with $\tilde{n}_1=2, \tilde{n}_2=\tilde{n}_3=1$,
\begin{align*}
	\LFTA=\begin{bmatrix} \LPVA_0 & \LPVA_1 \\ F_{11} & F_{12} \end{bmatrix}, \quad \LFTB=\begin{bmatrix} \LPVB_0 \\ \mathbf{O}_{2,1} \end{bmatrix}, 
		\LFTC=\begin{bmatrix} \LPVC_0 & \mathbf{O}_{1,2} \end{bmatrix}
\end{align*}
where 
\[
	F_{11}=\begin{bmatrix} 0 & 0 \\ 0 & 1 \end{bmatrix}, \quad F_{12}=\begin{bmatrix} 0 & 1 \\ 0 & 0 \end{bmatrix}
\]
while the second LFT is of the form \eqref{FALPV2LFT} with $\tilde{n}_1=2, \tilde{n}_2=\tilde{n}_3=1$
\begin{align*}
	\LFTA=\begin{bmatrix} T\LPVA_0T^{-1} & \hat{F}_{0,1} \\ \hat{F}_{11} & \hat{F}_{12} \end{bmatrix}, \quad \LFTB=\begin{bmatrix} T\LPVB_0 \\ \mathbf{O}_{2,1} \end{bmatrix}, 
		\LFTC=\begin{bmatrix} \LPVC_0 T^{-1} & \mathbf{O}_{1,2} \end{bmatrix}
\end{align*}
where 
\begin{align*}
	& \hat{F}_{0,1}=\begin{bmatrix}  0  &  -6.3246  \\
	0  & -15.8114 \end{bmatrix} \\
	& \hat{F}_{11}=\begin{bmatrix} -0.9487  &  0.3162 \\
		0  &       0 \end{bmatrix}, \quad 
		\hat{F}_{12}=\begin{bmatrix}     0    &     0  \\
		1 &         0   \end{bmatrix}
\end{align*}
We can see that the two LFTs are not isomorphic, and in fact not even formally input-output equivalent.
\end{Example}

Additionally, minimization steps obscure the relationship between FALPV 
matrices and resulting LFTs, complicating differentiation between the effects 
of FALPV matrices and $\psi$.

Preservation of minimality, input-output equivalence, and isomorphism during 
FALPV-to-LFT transformation has not been formally addressed in the literature.

This paper refines existing methods by focusing on constructing a minimal LFT 
realization of the non-linear function $\psi$ only. This realization is then 
used to transform FALPVs into LFTs that inherit the minimality, input-output 
equivalence, and isomorphism properties of the FALPVs.

The resulting LFT explicitly depends on the matrices of the FALPV 
$\Sigma$ under mild assumptions, as mentioned in Remark \ref{rem:comp1}, see  
for details. This explicit dependence facilitates 

To this end, recall from \cite{Alkhoury2017a} the notions of parametrization, 
structural identifiability and minimality for FALPVs. Denote by 
$\mathcal{LPV}(\NP,\NX,\NU,\NY)$ the set of all FALPV models of the form 
\eqref{equ:FALPVSystem}. A FALPV parametrization is a function 
$\mathbf{\Sigma}:\Theta \rightarrow \mathcal{LPV}(\NP,\NX,\NU,\NY)$.

A FALPV parametrization $\mathbf{L}$ is \emph{structurally identifiable}, if 
for any two distinct parameter values $\theta_1, \theta_2 \in \Theta$, 
$\theta_1 \ne \theta_2$, the input-output maps of $\mathbf{L}(\theta_1)$ and 
$\mathbf{L}(\theta_2)$ are not equal. A FALPV parametrization $\mathbf{L}$ is 
\emph{structurally minimal}, if for any parameter value $\theta \in \Theta$, 
$\mathbf{L}(\theta)$ is minimal. In \cite{Alkhoury2017a} it is shown that 
structurally minimal FALPV parametrizations are structurally identifiable, if 
and only if they contain no two distinct isomorphic FALPV systems.

We say that an LFT parametrization $\mathbf{M}$ \emph{originates from a FALPV 
parametrization $\mathbf{L}$}, if for every $\theta \in \Theta$, 
$\mathbf{M}(\theta)$ is LFT $\mathcal{M}(\mathbf{L}(\theta))$ which is 
calculated from the FALPV $\mathbf{L}(\theta)$ as in \eqref{FALPV2LFT}. 

We say that the LFT parametrization $\mathbf{M}$ is \emph{structurally 
identifiable}, if for any two distinct parameter values $\theta_1, \theta_2 \in 
\Theta$, $\theta_1 \ne \theta_2$, $\mathbf{M}(\theta_1)$ and 
$\mathbf{M}(\theta_2)$ are not formally input-output equivalent. Finally, 
$\mathbf{M}$ is called \emph{structurally minimal}, if for every parameter 
value $\theta \in \Theta$, $\mathbf{M}(\theta)$ is a minimal LFT.

Intuitively, for structurally identifiable LFT parametrizations, there is a 
one-to-one correspondence between parameters and the input-output behavior of 
the corresponding LFT for any uncertainty structure which respects the fixed 
block-structure, in particular, for uncertainty blocks arising from scheduling 
signals. Consider a FALPV parametrization $\mathbf{L}$ and an LFT 
parametrization $\mathbf{M}$ arising from $\mathbf{L}$. Then \emph{$\mathbf{L}$ 
is structurally minimal respectively structurally identifiable $\iff$  
$\mathbf{M}$ is structurally minimal, respectively structurally identifiable. }

\end{Remark}
\end{color}

\section{Proofs}
\label{proof}

For the proofs, we introduce the following notation.
\begin{Notation}
\label{lfr_proof:not1}
Consider the canonical partitioning from Definition \ref{def:canonicalPartitioning} of
an LFT $\mathcal{M}$ of the form \eqref{LFTform}.
If $\nu=i_1i_2\cdots i_k \in (\mathbb{I}_1^d)^{*}$, $i_1,\ldots, i_k \in \mathbb{I}_1^d$, $k \ge 1 $, then 
\( [\LFTA]_{\nu} =
    \LFTA_{i_k,i_{k-1}}\LFTA_{i_{k-1},i_{k-2}} \cdots \LFTA_{i_2,i_1} \) for $k > 1$, and  
    $[\LFTA]_{i_1}$ is the identity matrix of size $d_{i_1} \times d_{i_1}$ if $k=1$.
For any  $i,j \in \mathbb{I}_1^d$, for any $\nu \in (\mathbb{I}_1^d)^{*}$,  let
 $[\LFTA \# \LFTB]_{i \nu} = [\LFTA]_{\i \nu} \LFTB_i$, 
$[\LFTC \# \LFTA]_{\nu i} =\LFTC_i [\LFTA]_{\nu i}$, and
 $[\LFTC \# \LFTA \# \LFTB]_{i \nu j }= \LFTC_i [\LFTA]_{i \nu j}\LFTB_j$
\end{Notation}
\begin{proof}[Proof of Lemma\ref{lem:rational}]
  \textbf{(I)}
  Stability of $\widetilde{\mathcal{M}}(\Sigma,\psi)$ follows from that of $M$: $Y=\Diag[P_1,\ldots,P_{\NP}] >0$
  a structured matrix such that $F^TYF - Y < 0$, then for $\bar{Y}=Y \otimes \I_{\NX+\NU}$,
  $\widetilde{\mathcal{F}}^T\bar{Y}\widetilde{\mathcal{F}} - \bar{Y}=(F^TYF-Y) \otimes \I_{\NX+\NU}$, 
  and by
  \cite[page 708]{golub2013matrix},  $(F^TYF-Y) \otimes \I_{\NX+\NU} < 0$.

  Concerning $\widetilde{\mathcal{M}}(\Sigma,\psi)$ being a realization of $\tilde{\LPVS}_{\Sigma}$,
   notice that if $\{(H_i,F_{i,j},G_j)\}_{i,j=1}^{\NP}$ 
  and $\{(\widetilde{\mathcal{H}}_i,\widetilde{\mathcal{F}}_{i,j},\widetilde{\mathcal{G}}_j)\}_{i,j=1}^{\NP}$ is
   the canonical partioning of $M$ and $\widetilde{\mathcal{M}}(\Sigma,\psi)$ respectively, then 
   $\widetilde{\mathcal{F}}_{i,j}=F_{i,j} \otimes \I_{\NX+\NU}$, 
   $\widetilde{\mathcal{G}}_{j}=G_{j} \otimes \I_{\NX+\NU}$, 
   and $\widetilde{\mathcal{H}}_i= \sum_{l=1}^{\NPSI} (H_i)_l \otimes \begin{bmatrix}\LPVA_l & \LPVB_l\\\LPVC_l & \LPVD_l
  \end{bmatrix}$
  where $(H_i)_l$ denotes the $l$th row of $H_i$. 
  The rest of the proof follows by repeated application of  $(A \otimes B)(C \otimes D)=AC \otimes BD$.

  Since $\widetilde{\mathcal{M}}(\Sigma,\psi)$ is stable, by \cite[Theorem 4]{Beck1999} it follows that 
  $\I-\widetilde{F}\tilde{\Delta}_{\mathbf{p}}$ is invertible for $\widetilde{\Delta}_{\mathbf{p}}=\Delta_{\mathbf{p}}(\{n'_i\}_{i=1}^{\NP})$.
  Notice that $\psi_l(\mathbf{p})=H_l\bar{\Delta}_{\mathbf{p}}(\I-F\bar{\Delta}_{\mathbf{p}})^{-1}G$,
  where $\bar{\Delta}_{\mathbf{p}}$ is of the form \eqref{delta:const} for $d=\NP$ and $\nLFT_i=n_i$, $i \in \mathbb{I}_1^{\NP}$.
  Then $\widetilde{\Delta}_{\mathbf{p}}=\bar{\Delta}_{\mathbf{p}} \otimes \I_{\NX+\NU}$, and 
  by using that the Kronecker product being distributive w.r.t.
  matrix multiplication \cite[Section 12.3.1]{golub2013matrix}, 
  it follows that $\sum_{l=1}^{\NPSI}\begin{bmatrix}\LPVA_l & \LPVB_l\\\LPVC_l & \LPVD_l\end{bmatrix}\
  \psi_l (\mathbf{p})= \widetilde{\mathcal{H}} \widetilde{\Delta}_{\mathbf{p}} (\I - \widetilde{\mathcal{F}}\widetilde{\Delta}_{\mathbf{p}})^{-1} \widetilde{\mathcal{G}}$.
  

  \textbf{(II)}  $\mathcal{M}(\Sigma,\psi)$ is a minimal stable realization of $\tilde{\LPVS}_{\Sigma}$ by
  \cite[proof Lemma 11]{Beck1999}. \textcolor{red}{By} \cite[Theorem 9]{Beck1999}, 
  $\mathcal{M}(\Sigma,\psi) \star \Delta_{\mathbf{p}} = \widetilde{\mathcal{M}}(\Sigma,\psi) \star \widetilde{\Delta}_{\mathbf{p}}$,
  where $\Delta_{\mathbf{p}}$, $\widetilde{\Delta}_{\mathbf{p}}$ are interpreted as operators on sequences acting element-wise:
  $(\Delta_{\mathbf{p}}z)(t)=\Delta_{\mathbf{p}}z(t)$, 
  $(\widetilde{\Delta}_{\mathbf{p}}z)(t)=\widetilde{\Delta}_{\mathbf{p}}z(t)$.
  Then  \( \widetilde{\mathcal{H}} \widetilde{\Delta}_{\mathbf{p}} (\I - \widetilde{\mathcal{F}}\widetilde{\Delta}_{\mathbf{p}})^{-1} 
  \widetilde{\mathcal{G}}=\mathcal{H} \Delta_{\mathbf{p}} (\I - \mathcal{F} \Delta_{\mathbf{p}})^{-1} \mathcal{G} \), from which
  \eqref{eq:SumMatricesPsi} follows.
  \end{proof}
\begin{Notation}
\label{matrix_prod}
Consider the $\NX \times \NX$ matrices $\{\LPVA_i\}_{i=0}^{\NP}$ and consider a word $v \in (\mathbb{I}_0^{\NPSI})^{*}$.
Define the matrix $\LPVA_v$ as follows:
$\LPVA_{\epsilon}=I_{\NX}$, and for any $v=j_1j_2\cdots j_k$, $k > 0$, $j_1,\ldots,j_k \in \mathbb{I}_0^{\NPSI}$,
$\LPVA_v = \LPVA_{j_k}\LPVA_{j_{k-1}} \cdots \LPVA_{j_1}$.
%
\end{Notation}

 Define the map $\phi:(\mathbb{I}_1^{\NP})^{*} \rightarrow (\mathbb{I}_1^{\NP+1})^{*}$ as follows:
 $\phi(\epsilon)=\epsilon$, $\phi(i_1i_2 \cdots i_k)=(i_1+1)(i_2+1)\cdots (i_k+1)$.
 The proof of  Theorem \ref{th:FALPVtoLFT} follows from the following series of lemmas. 
 \begin{Lemma}
 \label{decomp:word}
  For any $w \in (\mathbb{I}_1^{\NP+1})^{*}$, either $w$ does not contain the symbol $1$, and there exist a unique 
 $v_1 \in (\mathbb{I}_1^{\NP})^{*}$ such that $w=\phi(v_1)$, or there exist a unique collection of
  integers $\{\alpha_i\}_{i=1}^{k}$ 
  and words $\{v_i\}_{i=1}^{k+1}$ from $(\mathbb{I}_1^{\NP})^{*}$, 
  such that 
  \begin{equation}
  \label{decomp:word:eq}
    w=\phi(v_1)1^{\alpha_1}\phi(v_2)1^{\alpha_2}\cdots \phi(v_k)1^{\alpha_k}\phi(v_{k+1}). 
   \end{equation}
   and  $|v|_i > 0$, $i=2,\ldots,k$ 
 \end{Lemma}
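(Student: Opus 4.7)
My plan is to prove Lemma \ref{decomp:word} by observing that $\phi$ is a bijection from $(\mathbb{I}_1^{\NP})^{*}$ onto the set $(\mathbb{I}_2^{\NP+1})^{*}$ of words over the alphabet $\{2,3,\ldots,\NP+1\}$, with inverse that subtracts $1$ from every symbol. Hence a word in $(\mathbb{I}_1^{\NP+1})^{*}$ lies in the image of $\phi$ if and only if it contains no occurrence of the symbol $1$. This immediately settles the first case: if $w$ does not contain $1$, then the unique $v_1 \in (\mathbb{I}_1^{\NP})^{*}$ with $w=\phi(v_1)$ is obtained by subtracting $1$ from every symbol of $w$.

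For the second case, I would decompose $w$ according to its maximal runs of the symbol $1$. Suppose $w \in (\mathbb{I}_1^{\NP+1})^{*}$ contains at least one $1$. Write $w$ as a concatenation of maximal factors, alternately belonging to $\{1\}^{+}$ and to $(\mathbb{I}_2^{\NP+1})^{*}$. Let $k$ be the number of maximal runs of $1$'s in $w$, with lengths $\alpha_1,\ldots,\alpha_k \ge 1$, and let $u_0,u_1,\ldots,u_k$ be the (possibly empty) factors over $\{2,\ldots,\NP+1\}$ appearing before the first run, between consecutive runs, and after the last run, respectively. By maximality of the runs, $u_i$ is non-empty for $1 \le i \le k-1$, while $u_0$ and $u_k$ may be empty. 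Then
\[
w = u_0\, 1^{\alpha_1}\, u_1\, 1^{\alpha_2}\, \cdots\, u_{k-1}\, 1^{\alpha_k}\, u_k.
\]
Since each $u_j \in (\mathbb{I}_2^{\NP+1})^{*}$, there is a unique $v_{j+1} \in (\mathbb{I}_1^{\NP})^{*}$ with $u_j=\phi(v_{j+1})$, and $|v_{j+1}|>0$ for $1 \le j \le k-1$, i.e.\ $|v_i|>0$ for $i=2,\ldots,k$. This yields the required form \eqref{decomp:word:eq}.

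For uniqueness, suppose $w$ also admits a decomposition \eqref{decomp:word:eq} with integers $\{\alpha'_i\}_{i=1}^{k'}$ and words $\{v'_i\}_{i=1}^{k'+1}$ satisfying $|v'_i|>0$ for $i=2,\ldots,k'$. Because the symbols occurring in $\phi(v'_i)$ all lie in $\{2,\ldots,\NP+1\}$ and never equal $1$, while the factors $1^{\alpha'_i}$ consist entirely of $1$'s and are separated in \eqref{decomp:word:eq} by the non-empty blocks $\phi(v'_i)$ (non-empty because $|v'_i|>0$ for the internal indices), the runs $1^{\alpha'_i}$ are precisely the maximal runs of $1$'s in $w$. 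Thus $k'=k$, $\alpha'_i=\alpha_i$ for all $i$, and the factors $\phi(v'_j)$ coincide with the $u_{j-1}$ obtained above. Since $\phi$ is injective, $v'_j=v_j$ for all $j$, proving uniqueness.

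The argument is essentially a routine combinatorial decomposition, so no single step is genuinely hard; the main care is simply in justifying that the decomposition induced by maximal runs of $1$'s is unique and correctly identifies the blocks $\phi(v_i)$. A cleaner inductive alternative would proceed by induction on $|w|$: if $w$ is empty or contains no $1$, the claim is immediate; otherwise split $w$ at the first (or last) maximal run of $1$'s and apply the inductive hypothesis to the remaining factor. Either formulation yields the result.
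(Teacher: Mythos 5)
Your proposal is correct. Note, however, that the paper does not actually spell out a proof of this lemma: it only states that the result follows ``by induction on the length of $w$,'' which is the inductive alternative you mention in your last paragraph. Your primary argument takes a slightly different, more explicit route: you factor $w$ into its maximal runs of the symbol $1$ and the intervening factors over $\{2,\ldots,\NP+1\}$, identify the latter with $\phi$-images using that $\phi$ is a bijection onto $(\mathbb{I}_2^{\NP+1})^{*}$, and then get uniqueness by observing that in any decomposition of the form \eqref{decomp:word:eq} the blocks $1^{\alpha_i}$ are forced to be exactly the maximal runs of $1$'s (since the internal blocks $\phi(v_i)$, $i=2,\ldots,k$, are non-empty and $1$-free, and empty boundary blocks only occur at the ends of $w$), after which injectivity of $\phi$ pins down the $v_i$. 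This buys you an explicit construction of the decomposition and a direct uniqueness argument in one pass, whereas the paper's induction is merely asserted and would require essentially the same case analysis (split at the first or last run of $1$'s) carried along the induction. Both arguments are routine and equally valid; your version is the more self-contained of the two.
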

  The proof of Lemma \ref{decomp:word} is by induction on the length of $w$. 
  
  Define the maps $\tilde{\LPVS}_{X}$ by $\tilde{\LPVS}_{X}(\nu)=\sum_{i=1}^{\NPSI} \LPVX_i \LPVS^i(\nu)$,
  $\mathscr{X}=\mathscr{A},\mathscr{B},\mathscr{C},\mathscr{D}$.
   From the definition of $\hat{\LPVS}$ it follows that
  \begin{equation}
  \label{FALPV2LFT:lemma1:eq2} 
  \begin{split}
  \tilde{\LPVS}_{A}(\nu)={[\mathcal{H}_{x} \# \mathcal{F} \# \mathcal{G}_{u}]}_{\nu}, \quad  
  \tilde{\LPVS}_{B}(\nu)=[\mathcal{H}_{x} \# \mathcal{F} \# \mathcal{G}_u]_{\nu},  \\
 \tilde{\LPVS}_{C}(\nu)=[\mathcal{H}_{y} \# \mathcal{F} \# \mathcal{G}_x]_{\nu}, \quad 
  \tilde{\LPVS}_{D}(\nu)=[\mathcal{H}_{y} \# \mathcal{F} \# \mathcal{G}_u]_{\nu}
  \end{split}
  \end{equation}
  \begin{Lemma}
  \label{FALPV2LFT:lemma2.0}
  For any $v \in (\mathbb{I}_1^{\NP})^{*}$, $|v| > 0$,
   $[\LFTA]_{1\phi(v)1}=\tilde{\LPVS}_{A}(v)$, $[\LFTA \# \LFTB]_{\phi(v)1}=\tilde{\LPVS}_{B}(v)$, and 
   $[\LFTC \# \LFTA]_{1\phi(v)}=\tilde{\LPVS}_{C}(v)$.
  \end{Lemma}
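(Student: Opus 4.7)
The plan is a direct unfolding of the products in Notation \ref{lfr_proof:not1} using the block structure of $\mathcal{M}(\Sigma)$. Writing $v = j_1 j_2 \cdots j_k$ with $k = |v| \ge 1$, the three words in the statement become $1\,(j_1{+}1)\cdots(j_k{+}1)\,1$ together with its suffix $(j_1{+}1)\cdots(j_k{+}1)\,1$ and its prefix $1\,(j_1{+}1)\cdots(j_k{+}1)$.

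The first step is to read off the canonical partitioning of $\mathcal{M}(\Sigma)$ from \eqref{FALPV2LFT}: since $\tilde{n}_1 = \NX$ and $\tilde{n}_{i+1} = \bar{n}_i$, the block indexed by $1$ singles out the FALPV state component, while the blocks indexed by $i{+}1$ inherit those of $\mathcal{M}(\Sigma,\psi)$. Concretely, $\LFTA_{1,1}=\LPVA_0$, $\LFTA_{1,i+1}=(\mathcal{H}_x)_i$, $\LFTA_{i+1,1}=(\mathcal{G}_x)_i$, $\LFTA_{i+1,j+1}=\mathcal{F}_{i,j}$, and analogously $\LFTB_1=\LPVB_0$, $\LFTB_{i+1}=(\mathcal{G}_u)_i$, $\LFTC_1=\LPVC_0$, $\LFTC_{i+1}=(\mathcal{H}_y)_i$ for $i,j\in\inrange{1}{\NP}$, where $(\mathcal{H}_x)_i$, $(\mathcal{H}_y)_i$, $(\mathcal{G}_x)_i$, $(\mathcal{G}_u)_i$ denote the $i$-th block columns or rows induced by the canonical partitioning of $\mathcal{M}(\Sigma,\psi)$.

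With this in hand, Notation \ref{lfr_proof:not1} gives $[\LFTA]_{1\phi(v)1} = \LFTA_{1,j_k+1}\LFTA_{j_k+1,j_{k-1}+1}\cdots \LFTA_{j_2+1,j_1+1}\LFTA_{j_1+1,1}$, which by the partitioning above collapses to $(\mathcal{H}_x)_{j_k}\mathcal{F}_{j_k,j_{k-1}}\cdots \mathcal{F}_{j_2,j_1}(\mathcal{G}_x)_{j_1}$. The same routine applied to $[\LFTA\#\LFTB]_{\phi(v)1}$ and $[\LFTC\#\LFTA]_{1\phi(v)}$ yields the analogous products with $(\mathcal{G}_x)_{j_1}$ replaced by $(\mathcal{G}_u)_{j_1}$ and $(\mathcal{H}_x)_{j_k}$ replaced by $(\mathcal{H}_y)_{j_k}$, respectively. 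Expanding the right-hand sides $\tilde{\LPVS}_A(v)$, $\tilde{\LPVS}_B(v)$, $\tilde{\LPVS}_C(v)$ via \eqref{FALPV2LFT:lemma1:eq2} in the same way, using the canonical partitioning of $\mathcal{M}(\Sigma,\psi)$, produces exactly the same three products, establishing the identities.

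The only case that requires care is $k=1$, where the $\mathcal{F}$-chain is empty and each side reduces to a product of a single $\mathcal{H}_\ast$ factor and a single $\mathcal{G}_\ast$ factor; this is consistent with the convention $[\LFTA]_{i_1}=I$ of Notation \ref{lfr_proof:not1}. Beyond this edge case, the argument is pure bookkeeping on words and on the index shift $j \mapsto j{+}1$ between $\mathcal{M}(\Sigma,\psi)$ and $\mathcal{M}(\Sigma)$, so I foresee no genuine obstacle.
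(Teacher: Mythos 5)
Your proposal is correct and follows essentially the same route as the paper: read off the canonical partitioning of $\mathcal{M}(\Sigma)$ from \eqref{FALPV2LFT}, unfold the word products of Notation \ref{lfr_proof:not1} so they collapse to $(\mathcal{H}_x)_{j_k}\mathcal{F}_{j_k,j_{k-1}}\cdots\mathcal{F}_{j_2,j_1}(\mathcal{G}_x)_{j_1}$ (and its $\mathcal{G}_u$, $\mathcal{H}_y$ variants), and identify these with $\tilde{\LPVS}_A(v)$, $\tilde{\LPVS}_B(v)$, $\tilde{\LPVS}_C(v)$ via \eqref{FALPV2LFT:lemma1:eq2}. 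Your explicit treatment of the $k=1$ edge case is a harmless addition; no gap.
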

  \begin{proof}
  We prove the first equation, the proof of the rest is similar.
   Let $v=i_1\cdots i_k$, $i_1,\ldots,i_k \in \mathbb{I}_1^{\NP}$, $k > 0$. Then 
   \[
      \begin{split}
       & [\LFTA]_{1\phi(v)1}=\LFTA_{1,i_k+1}(\LFTA_{i_k+1,i_{k-1}+1} \cdots \LFTA_{i_2+1,i_1+1}i)\LFTA_{i_1+1,1} = \\
       & =(\mathcal{H}_{x})_{i_k} (\mathcal{F}_{i_k,i_{k-1}} \cdots \mathcal{F}_{i_2,i_1}) (\mathcal{G}_x)_{i_1} = 
     [\mathcal{H}_x \# \mathcal{F} \# \mathcal{G}_x]_{v}
      \end{split}
   \]
   and the last expression equals $\tilde{\LPVS}_{\Sigma,A}(v)$, 
    where $(\mathcal{G}_x)_i, (\mathcal{G}_u)_i$ are the first $\NX$ and the last $\NU$ columns of $\mathcal{G}_i$, 
    $(\mathcal{H}_x)_i, (\mathcal{H}_y)_i$ are the first $\NX$ and the last $\NY$ \textcolor{red}{rows of $\mathcal{H}_i$.}
  \end{proof}
  \begin{Lemma}
  \label{FALPV2LFT:lemma3}
   For any $w \in (\mathbb{I}_1^{\NP+1})^{*}$ which contains at least one  occurrence of the symbol $1$, 
   there exists a finite subset $S \subseteq (\mathbb{I}_0^{\NPSI})^{*}$ which depends only $w$, such
   that $[\LFTA \# \LFTB]_{w1}$, $[\LFTC \# \LFTA]_{1w}$, $[\LFTC \# \LFTA \# \LFTB]_{1w1}$ 
   can be written as
   as a linear combination of $\{\LPVA_v \LPVB_j\}_{v \in S, j \in \mathbb{I}_0^{\NP}}$,
  $\{\LPVC_j \LPVA_v\}_{v \in S, j \in \mathbb{I}_0^{\NP}}$, $\{\LPVC_j \LPVA_v \LPVB_i\}_{v \in S, i,j \in \mathbb{I}_0^{\NP}}$,
  respectively, and the coefficients of these linear combinations depend only on $w$ and $\{\psi^l\}_{l=1}^{\NP}$.

%
%
%

  For any sequence $w \in (\mathbb{I}_1^{\NP+1})^{*}$ which contains no $1$,
  $[\LFTA \# \LFTB]_{w}$, $[\LFTC \# \LFTA]_{w}$, $[\LFTC \# \LFTA \# \LFTB]_{w}$,
  equal to 
  $[\mathcal{F} \# \mathcal{G}_u]_{\phi^{-1}(w)}$, $[\mathcal{H}_y \# \mathcal{F}]_{\phi^{-1}(w)}$, 
  $[\mathcal{H}_y \# \mathcal{F} \# \mathcal{G}_u]_{\phi^{-1}(w)}$ respectively.
  \end{Lemma}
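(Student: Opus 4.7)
The plan is to combine the block structure of the LFT matrices of $\mathcal{M}(\Sigma)$ with the decomposition of $w$ from Lemma~\ref{decomp:word} and the identities in Lemma~\ref{FALPV2LFT:lemma2.0}. The key observation is that in the canonical partitioning of $\mathcal{M}(\Sigma)$ from \eqref{FALPV2LFT}, index $1$ encodes the ``FALPV slot'' ($\LFTA_{1,1}=\LPVA_0$, $\LFTB_1=\LPVB_0$, $\LFTC_1=\LPVC_0$), while indices $2,\ldots,\NP+1$ encode the ``$\psi$-recognition slot'' (using blocks of $\mathcal{F},\mathcal{G}_x,\mathcal{G}_u,\mathcal{H}_x,\mathcal{H}_y$), and the transitions between the two slots are mediated by $\LFTA_{1,j+1}=(\mathcal{H}_x)_j$ and $\LFTA_{i+1,1}=(\mathcal{G}_x)_i$ for $i,j\in\mathbb{I}_1^{\NP}$.

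The case when $w$ contains no $1$ is immediate: by Lemma~\ref{decomp:word}, $w=\phi(v)$ for a unique $v\in(\mathbb{I}_1^{\NP})^{*}$; every factor of $[\LFTA]_w$ is then an $\mathcal{F}$-block, while the boundary $\LFTB$ and $\LFTC$ factors reduce to blocks of $\mathcal{G}_u$ and $\mathcal{H}_y$ respectively, so the three asserted identities follow directly from Notation~\ref{lfr_proof:not1}.

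For the main case (at least one $1$ in $w$), I would decompose $w=\phi(v_1)1^{\alpha_1}\phi(v_2)\cdots 1^{\alpha_k}\phi(v_{k+1})$ by Lemma~\ref{decomp:word}, with $k\ge 1$, $\alpha_j\ge 1$, and $|v_j|>0$ for $j\in\{2,\ldots,k\}$. Reading $[\LFTA]_{1w1}$ through the block structure, each maximal non-$1$ run $\phi(v_j)$ flanked by $1$s yields the factor $[\mathcal{H}_x\#\mathcal{F}\#\mathcal{G}_x]_{v_j}=\tilde{\LPVS}_A(v_j)$ by Lemma~\ref{FALPV2LFT:lemma2.0} and \eqref{FALPV2LFT:lemma1:eq2}, while each intermediate $1^{\alpha_j}$-run contributes $\LPVA_0^{\alpha_j-1}$. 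In the generic configuration $v_1,v_{k+1}\neq\epsilon$ one obtains
\[
[\LFTA]_{1w1} = \tilde{\LPVS}_A(v_{k+1})\,\LPVA_0^{\alpha_k-1}\,\tilde{\LPVS}_A(v_k)\cdots \LPVA_0^{\alpha_1-1}\,\tilde{\LPVS}_A(v_1),
\]
and after prepending $\LPVC_0$ and appending $\LPVB_0$, expanding each $\tilde{\LPVS}_A(v_j)=\sum_l \LPVA_l\LPVS^l(v_j)$ yields a finite linear combination of terms $\LPVC_0\LPVA_{\bar v}\LPVB_0$ indexed by $\bar v=l_{k+1}0^{\alpha_k-1}l_k\cdots 0^{\alpha_1-1}l_1\in S\subseteq(\mathbb{I}_0^{\NPSI})^{*}$, with coefficients $\prod_j \LPVS^{l_j}(v_j)$; the set $S$ depends only on $w$, and the coefficients depend only on $w$ and $\LPVS$. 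The same accounting applies to $[\LFTA\#\LFTB]_{w1}$ and $[\LFTC\#\LFTA]_{1w}$, except that when the adjacent $v_1$ or $v_{k+1}$ is non-empty the boundary $\LFTB$ or $\LFTC$ factor absorbs into $\tilde{\LPVS}_B(v_1)$ or $\tilde{\LPVS}_C(v_{k+1})$ via the $B$- or $C$-variant of Lemma~\ref{FALPV2LFT:lemma2.0}, producing linear combinations of $\LPVA_{\bar v}\LPVB_j$ or $\LPVC_j\LPVA_{\bar v}$ respectively.

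The main obstacle is the careful bookkeeping of the boundary configurations where $v_1$ and/or $v_{k+1}$ is empty. For instance, when $v_1=\epsilon$ the rightmost $\LPVA_0^{\alpha_1-1}\tilde{\LPVS}_A(v_1)$ pair collapses to $\LPVA_0^{\alpha_1}$, and the corresponding $l_1$-sum in $\LPVB_{l_1}$ degenerates to the single term $\LPVB_0$; a symmetric simplification occurs at the left end when $v_{k+1}=\epsilon$. All four boundary configurations reduce to the same template and yield expressions of the claimed form, confirming that $S$ and the coefficients depend only on $w$ and $\LPVS$ as required.
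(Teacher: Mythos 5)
Your proof is correct and takes essentially the same route as the paper's: decompose $w$ via Lemma~\ref{decomp:word}, factor the matrix product at the occurrences of the symbol $1$, identify each flanked run $\phi(v_j)$ with $\tilde{\LPVS}_{A}(v_j)$ (respectively $\tilde{\LPVS}_{B}$, $\tilde{\LPVS}_{C}$ at the boundaries) via Lemma~\ref{FALPV2LFT:lemma2.0} and each $1$-run with a power of $\LPVA_0$, and then expand the resulting linear combinations with coefficients built from the values of $\LPVS$. The only cosmetic discrepancy is that your index word $\bar v$ is written in the order opposite to the convention of Notation~\ref{matrix_prod} (where $\LPVA_{j_1\cdots j_k}=\LPVA_{j_k}\cdots\LPVA_{j_1}$), which does not affect the validity of the claim.
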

  \begin{proof}
  \textcolor{red}{If $w$ contains at least one $1$, then
  \eqref{decomp:word:eq} holds.}
  Then  
  $[\LFTA \# \LFTB]_{w1}$
  $[\LFTC \# \LFTA]_{1w}$ and $[\LFTC \# \LFTA \# \LFTB]_{1w1}$ \textcolor{red}{are products} of matrices of the form
  $[\LFTA]_{1\phi(v_{k+1}1)}$, $[\LFTC \# \LFTA]_{\phi(v_{k+1})1}$,
  $[\LFTA]_{1\phi(v_i)1^{\alpha_i}}$, $[\LFTA \# \LFTB]_{\phi(v_{1})1}$, $i \in \mathbb{I}_1^k$.
  Note that if  $\alpha > 1$, then $[\LFTA]^{\alpha}=\LPVA_0^{\alpha-1}$, and
  $[\LFTA]_{\phi(v)1^{\alpha}}=\LPVA_0^{\alpha-1} [\LFTA]_{1\phi(v)}$.
  By Lemma \ref{FALPV2LFT:lemma2.0}, we can write $[\LFTA]_{1\phi(v)1}$ as
  $\tilde{\LPVS}_{A}(v)$, and the latter is a linear combination of $\{\LPVA_l\}_{l=1}^{\NP}$, and the coefficients of this linear 
  combination do not depend on the $\Sigma$. It then follows that 
  $\LPVA_0^{\alpha-1} [\LFTA]_{1\phi(v)}=[\LFTA]_{\phi(v)1^{\alpha}}$ is a linear combination of 
  $\{\LPVA^{\alpha-1} \LPVA_l\}_{l=1}^{\NPSI}$, and the coefficients of this linear combination do not depend on 
  the matrices of $\Sigma$.
  Similarly, from Lemma \ref{FALPV2LFT:lemma2.0}, $[\LFTA \# \LFTB]_{\phi(v)1}$ and $[\LFTC \# \LFTA]_{1\phi(v)}$
  \textcolor{red}{are linear combinations} of $\{\LPVB_l\}_{l=1}^{\NPSI}$ and $\{\LPVC_l\}_{l=1}^{\NPSI}$, respectively, and the coefficients of these linear combinations 
  do not depend on the matrices of $\Sigma$.
  Hence, \textcolor{red}{since} $[\LFTA \# \LPVB]_{w1}$, $[\LFTC \# \LFTA]_{1w}$ and $[\LFTC \# \LFTA \# \LFTB]_{1w1}$ 
  \textcolor{red}{are products of matrices}, each of which \textcolor{red}{is a} linear combination of either
  $\{\LPVB_l\}_{l=1}^{\NP}$, $\{\LPVC_l\}_{l=1}^{\NP}$, $\{ \LPVA_0^{\alpha}\LPVA_l\}_{l=1}^{\NPSI}$, $\LPVA_0^{\alpha}$, for some
  $\alpha < |w|$,  and the coefficients of these linear combinations do not depend on the matrices of $\Sigma$.
  Hence $[\LFTA]_{w}$ satisfies the statement of the theorem.
  \end{proof}
  \begin{Lemma}
  \label{FALPV2LFT:lemma4}
   There exist vectors $\mathbf{p}_1,\ldots, \mathbf{p}_{\NPSI+1} \in \mathbb{P}$ and real numbers
   $\lambda_i^l$, $i \in \mathbb{I}_1^{\NPSI+1}$, $l \in \mathbb{I}_1^{\NPSI}$ such that for all $l \in \mathbb{I}_1^{\NPSI}$, 
   $\sum_{i=1}^{\NPSI+1} \lambda_i^l=1$ and 
   and for all  $j \in \mathbb{I}_{1}^{\NPSI}$,  $\sum_{i=1}^{\NPSI+1} \lambda_i^{l} \psi_j(\mathbf{p}_i)=\delta_{j,l}$, 
  where 
  $\delta_{j,l}=1$ if $j=l$ and $\delta_{j,l}=0$ for $j \ne l$.
  \end{Lemma}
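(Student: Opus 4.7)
The plan is to recast the existence of the $\lambda_i^l$ as the solvability of a linear system. For chosen points $\mathbf{p}_1,\ldots,\mathbf{p}_{\NPSI+1} \in \mathbb{P}$, define the matrix
\[
\Psi := \begin{bmatrix} 1 & 1 & \cdots & 1 \\ \psi_1(\mathbf{p}_1) & \psi_1(\mathbf{p}_2) & \cdots & \psi_1(\mathbf{p}_{\NPSI+1}) \\ \vdots & \vdots & \ddots & \vdots \\ \psi_{\NPSI}(\mathbf{p}_1) & \psi_{\NPSI}(\mathbf{p}_2) & \cdots & \psi_{\NPSI}(\mathbf{p}_{\NPSI+1}) \end{bmatrix} \in \mathbb{R}^{(\NPSI+1) \times (\NPSI+1)}.
\]
Writing $\lambda^l = (\lambda_1^l,\ldots,\lambda_{\NPSI+1}^l)^T$, the two conditions demanded by the lemma are equivalent to the single matrix equation $\Psi \lambda^l = e_1 + e_{l+1}$, where $e_k$ denotes the $k$-th standard basis vector of $\mathbb{R}^{\NPSI+1}$. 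Hence it suffices to pick the $\mathbf{p}_i$ so that $\Psi$ is invertible and then set $\lambda^l := \Psi^{-1}(e_1 + e_{l+1})$ for each $l \in \mathbb{I}_1^{\NPSI}$.

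Invertibility of $\Psi$ is equivalent to the $\NPSI+1$ evaluation vectors $v_i = (1, \psi_1(\mathbf{p}_i), \ldots, \psi_{\NPSI}(\mathbf{p}_i))^T$ being linearly independent. I would construct them by induction on $k$: given a linearly independent collection $v_1,\ldots,v_k$ with $k \le \NPSI$, the claim is that a new point $\mathbf{p}_{k+1} \in \mathbb{P}$ can be found so that $v_{k+1} \notin \mathrm{Span}\{v_1,\ldots,v_k\}$. Indeed, if no such point existed, the map $\mathbf{p} \mapsto (1, \psi_1(\mathbf{p}), \ldots, \psi_{\NPSI}(\mathbf{p}))^T$ would take values in a proper subspace of $\mathbb{R}^{\NPSI+1}$; any nonzero annihilator $(c_0, c_1, \ldots, c_{\NPSI})$ of that subspace would yield $c_0 + \sum_{j=1}^{\NPSI} c_j \psi_j(\mathbf{p}) = 0$ for all $\mathbf{p} \in \mathbb{P}$, contradicting the standing hypothesis that $1, \psi_1, \ldots, \psi_{\NPSI}$ are linearly independent as functions on $\mathbb{P}$.

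The base case $k=1$ is immediate since $v_1$ has first entry $1 \ne 0$, giving a one-dimensional span. Iterating the inductive step $\NPSI$ times produces the required $\NPSI+1$ points that make $\Psi$ invertible, and solving $\Psi \lambda^l = e_1 + e_{l+1}$ finishes the argument. The only potentially subtle point is the annihilator step inside the induction, but this is a direct consequence of the linear independence hypothesis on $\{1, \psi_1, \ldots, \psi_{\NPSI}\}$ already imposed in the paper's definitions; once the lemma is recognized as a question about invertibility of a generalized Vandermonde-type matrix associated with the functions $\psi_j$, it reduces to routine linear algebra and no serious obstacle arises.
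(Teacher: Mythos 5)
Your proof is correct, and it reaches the same underlying fact as the paper by a somewhat more elementary route. The paper works directly in $\mathbb{R}^{\NPSI}$ with the set $\mathcal{V}=\{(\psi_1(\mathbf{p}),\ldots,\psi_{\NPSI}(\mathbf{p}))^T \mid \mathbf{p}\in\mathbb{P}\}$: it cites convexity-theory results (Webster) to argue that if the affine hull of $\mathcal{V}$ had dimension less than $\NPSI$ it would lie in a hyperplane, producing a relation $\sum_i n_i\psi_i - c\cdot 1=0$ that contradicts linear independence, so $\mathcal{V}$ contains an affine basis $b_1,\ldots,b_{\NPSI+1}$, and then each standard basis vector $e_l$ is written as an affine combination of the $b_i$. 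You instead homogenize: you append the constant coordinate $1$, reduce the two conditions to the single linear system $\Psi\lambda^l=e_1+e_{l+1}$ with the $(\NPSI+1)\times(\NPSI+1)$ evaluation matrix $\Psi$, and build points making $\Psi$ invertible by a greedy induction whose failure would yield a nonzero annihilator $(c_0,\ldots,c_{\NPSI})$ with $c_0+\sum_j c_j\psi_j\equiv 0$ on $\mathbb{P}$, again contradicting the standing linear-independence hypothesis. The two arguments are mathematically equivalent, since affine independence of the points $b_i$ in $\mathbb{R}^{\NPSI}$ is exactly linear independence of the lifted vectors $(1,b_i^T)^T$ in $\mathbb{R}^{\NPSI+1}$; what your version buys is self-containedness (no appeal to affine-hull facts from convexity theory, only basic linear algebra and an explicit construction of the $\mathbf{p}_i$ and $\lambda^l=\Psi^{-1}(e_1+e_{l+1})$), while the paper's version is shorter once the convexity citations are accepted.
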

  \begin{proof}
   \textcolor{red}{Since $1, \psi_1,\ldots,\psi_{\NPSI}$ are linearly independent, }
   \textcolor{red}{the set $\mathcal{V}=\{ (\psi_1(\mathbf{p}),\ldots, \psi_{\NPSI}(\mathbf{p}))^T \mid  \mathbf{p} \in \mathbb{P}\}$}
   contains an affine basis of $\mathbb{R}^{\NPSI}$. 
   Indeed, let $r$ be the dimension of the affine hull  of \textcolor{red}{$\mathcal{V}$.}
   If $r < \NPSI$, then by \cite[Corollary 1.4.2]{WebsterBook}, the affine hull of 
   $\mathcal{V}$
   belongs to at least one hyperplane, i.e., there exists 
   $n=\begin{bmatrix} n_1 & \ldots & n_{\NPSI} \end{bmatrix}^T \in \mathbb{R}^{\NPSI}$ 
   and $c \in \mathbb{R}$ such that $n \ne 0$ and for any
   $x=(\psi_1(\mathbf{p}),\ldots, \psi_{\NPSI}(\mathbf{p}))^T$, $\mathbf{p} \in \mathbb{P}$, 
   $n^Tx=c$, which is equivalent to $\sum_{i=1}^{\NPSI} n_i \psi_i - c \cdot 1 =0$,
   and the latter contradicts to linear independence of $1, \psi_1,\ldots, \psi_{\NPSI}$, since $n \ne 0$.  
   Hence, $r=\NPSI$.
   If the affine hull of  \textcolor{red}{$\mathcal{V}$}
   is of dimension $r=\NPSI$, then by \cite[page 15]{WebsterBook},  
   \textcolor{red}{$\mathcal{V}$ contains an affine basis $b_1,\ldots, b_{\NPSI+1}$}
   of $\mathbb{R}^{\NPSI}$. 
   This implies that there exists $\mathbf{p}_1,\ldots,\mathbf{p}_l$ such that 
    $b_i=\begin{bmatrix} \psi_1(\mathbf{p}_i) & \psi_2(\mathbf{p}_i) & \ldots & \psi_{\NPSI}(\mathbf{p}_i) \end{bmatrix}^T$. Since 
    $b_1,\ldots, b_{\NPSI+1}$ is an affine basis of $\mathbb{R}^{\NPSI}$, for any $b \in \mathbb{R}^{\NPSI}$,$b$ can be expressed as an affine combination of $b_1,\ldots,b_{\NPSI+1}$, i.e. there exist
 real numbers $\lambda_1,\ldots,\lambda_{\NPSI+1}$ such that $\sum_{l=1}^{\NPSI+1} \lambda_l=1$ and
 $b=\sum_{l=1}^{\NPSI+1} \lambda_i b_i$. By choosing $b$ as $e_1,\ldots,e_{\NPSI}$, where 
 $e_l$ is the $m$th standard basis vector of $\mathbb{R}^{\NPSI}$ (the $j$th element of $e_l$ equals $1$, all the other elements are zero, i.e., 
 the $j$th element of $e_l$ equals $\delta_{j,l}$), it follows that for any $l \in \mathbb{I}_1^{\NPSI}$ there exist real numbers $\lambda_i^l$, 
 $i \in \mathbb{I}_1^{\NPSI+1}$ such that for all 
$\sum_{i=1}^{\NPSI+1} \lambda_i^l =1$, and
$\sum_{i=1}^{\NPSI+1} \lambda_i^{l} b_i=e_l$. By taking the $j$th entry of both sides of the latter equation, 
the statement follows.
   \end{proof}
  \begin{Lemma}
  \label{FALPV2LFT:lemma5}
   For any $v \in (\mathbb{I}_0^{\NPSI})^{*}$, $i,j \in \mathbb{I}_0^{\NPSI}$, 
   the matrices $\LPVA_v\LPVB_j$, $\LPVC_i \LPVA_v$, $\LPVC_i \LPVA_v \LPVB_j$ 
   are limits of 
   linear combinations\footnote{\textcolor{red}{
   by limits of linear combinations of some family of matrices we mean limits $\lim_{k \rightarrow \infty}  Z_k$,
   where for each $k$, $Z_k$ is a linear combination of some matrices of the family.}} 
   of 
   $\{[\LFTA \# \LFTB]_{w1}\}_{w \in (\mathbb{I}_1^{\NP+1})^{*}}$,
    $\{[\LFTC \# \LFTA]_{1w}\}_{w \in (\mathbb{I}_1^{\NP+1})^{*}}$,
    $\{[\LFTC \# \LFTA \# \LFTB]_{1w1}\}_{w \in (\mathbb{I}_1^{\NP+1})^{*}}$, respectively, and the coefficients of those linear 
    combinations are independent of  $\Sigma$ and $\psi$. 
%
%
  \end{Lemma}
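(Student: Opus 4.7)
The plan is to prove the lemma by combining a structural identity for LFT quantities indexed by composite words with the affine-basis isolation of Lemma \ref{FALPV2LFT:lemma4}, applied factor by factor to the absolutely convergent series \eqref{psi:series} of each recognizable $\psi_l$.

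First I would establish, by a direct block-matrix computation from the canonical partitioning in \eqref{FALPV2LFT}, the structural identity
\[
[\LFTA\#\LFTB]_{\phi(v_0)\,1^{\alpha_0+1}\,\phi(v_1)\,\cdots\,1^{\alpha_{k-1}+1}\,\phi(v_k)\,1^{\alpha_k+1}} = \LPVA_0^{\alpha_k}\tilde{\LPVS}_A(v_k)\LPVA_0^{\alpha_{k-1}}\cdots\LPVA_0^{\alpha_0}\tilde{\LPVS}_B(v_0),
\]
valid for nonempty $v_0,\ldots,v_k \in (\mathbb{I}_1^{\NP})^{*}$ and integers $\alpha_s \ge 0$: each block $\phi(v_s)$ contributes $\tilde{\LPVS}_A(v_s)$ (respectively $\tilde{\LPVS}_B(v_0)$ at the rightmost position) by Lemma \ref{FALPV2LFT:lemma2.0} and \eqref{FALPV2LFT:lemma1:eq2}, while each run $1^{\alpha_s+1}$ between two blocks contributes $\LFTA_{1,1}^{\alpha_s}=\LPVA_0^{\alpha_s}$ via the $(1,1)$-entry of the partitioned $\LFTA$. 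Analogous identities hold for $[\LFTC\#\LFTA]_{1w}$ and $[\LFTC\#\LFTA\#\LFTB]_{1w1}$.

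Next I would expand $\tilde{\LPVS}_A(v_s)=\sum_{l_s=1}^{\NPSI}\LPVA_{l_s}\LPVS^{l_s}(v_s)$ and $\tilde{\LPVS}_B(v_0)=\sum_{l_0=1}^{\NPSI}\LPVB_{l_0}\LPVS^{l_0}(v_0)$, substitute the Taylor representation $\psi_l(\mathbf{p})=\sum_\nu\LPVS^l(\nu)\prod\mathbf{p}_{*}$ from \eqref{psi:series}, and apply Lemma \ref{FALPV2LFT:lemma4} independently to each of the $k+1$ variables $v_0,v_1,\ldots,v_k$: with the points $\{\mathbf{p}_i\}_{i=1}^{\NPSI+1}$ and scalars $\{\lambda_i^{l_s}\}$ of that lemma, the identity $\sum_{i_s}\lambda_{i_s}^{l_s}\psi_{m_s}(\mathbf{p}_{i_s})=\delta_{m_s,l_s}$ yields a product of Kronecker deltas that isolates $\LPVA_0^{\alpha_k}\LPVA_{l_k}\LPVA_0^{\alpha_{k-1}}\cdots\LPVA_0^{\alpha_0}\LPVB_{l_0}$ for any choice of $l_s \in \mathbb{I}_1^{\NPSI}$, expressed as an iterated series in the LFT quantities $[\LFTA\#\LFTB]_{w1}$ whose coefficients are determined entirely by the LFT realization of $\psi$. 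Absolute convergence, which follows from the analyticity of $\psi$ noted after \eqref{psi:series}, lets me truncate each inner $v_s$-series to obtain honest finite linear combinations of LFT quantities and then pass to the limit. The remaining case $j=0$ is handled by the direct identification $\LPVA_0^{\alpha}\LPVB_0=[\LFTA\#\LFTB]_{1^{\alpha+1}}$, and similarly $\LPVC_0$ on the $\LPVC$-side; the cases $\LPVC_i\LPVA_v$ and $\LPVC_i\LPVA_v\LPVB_j$ are treated symmetrically using $[\LFTC\#\LFTA]_{1w}$ and $[\LFTC\#\LFTA\#\LFTB]_{1w1}$.

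The main technical obstacle is the combinatorial bookkeeping that translates a given index pattern in $v\in(\mathbb{I}_0^{\NPSI})^{*}$, viewed as alternating runs of $0$s and nonzero letters, into the correct pattern of $1$s and $\phi(v_s)$-blocks in $w$; once this dictionary is in place, the coefficients produced by the iterated Lemma \ref{FALPV2LFT:lemma4} isolation depend only on $w$ and on the LFT realization of $\psi$, and in particular not on the matrices of $\Sigma$.
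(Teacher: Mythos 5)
Your proposal is correct and is essentially the paper's own argument: the paper likewise combines the affine-basis isolation of Lemma \ref{FALPV2LFT:lemma4} with the absolutely convergent series \eqref{psi:series} and the word--product dictionary of Lemma \ref{FALPV2LFT:lemma2.0} (runs of $1$s contributing powers of $\LPVA_0$, $\phi$-blocks contributing $\tilde{\LPVS}_{A}$, $\tilde{\LPVS}_{B}$, $\tilde{\LPVS}_{C}$ values), the only difference being organizational --- the paper first expresses each $\LPVA_l$, $\LPVB_l$, $\LPVC_l$ individually as a limit of linear combinations of $[\LFTA]_{1\phi(\nu)1}$, $[\LFTA \# \LFTB]_{\phi(\nu)1}$, $[\LFTC \# \LFTA]_{1\phi(\nu)}$ and then multiplies these limits, whereas you invert the composite-word product identity in all slots simultaneously. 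As in the paper's own proof, your coefficients depend on $\psi$ (through the points $\mathbf{p}_i$ and the $\lambda_i^l$) and are claimed independent only of the matrices of $\Sigma$, which is exactly what is used later in the proof of Theorem \ref{th:FALPVtoLFT}.
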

  \begin{proof}
    Each matrix $\mathcal{Z}=\LPVA_i,\LPVB_i,\LPVC_i,\LPVD_i$, 
    is a  limit 
    of linear combinations of the products of matrices of the form
    $\{[\LFTA]_{1\phi(\nu)1}\}_{\nu \in (\mathbb{I}_1^{\NP})^{*}}$,
    $\{[\LFTA \# \LFTB]_{\phi(\nu)1}\}_{\nu \in (\mathbb{I}_1^{\NP})^{*}}$, 
    $\{[\LFTC \# \LFTA]_{1\phi(\nu)}\}_{\nu \in (\mathbb{I}_1^{\NP})^{*}},
    \{[\LFTC \# \LFTA \# \LFTB]_{\phi(\nu)}\}_{\nu \in (\mathbb{I}_1^{\NP})^{*}}$, 
    respectively, for all $i \in \mathbb{I}_0^{\NP}$. The coefficients of these linear combinations 
    do not depend on the matrices of $\Sigma$ and $\mathcal{M}(\Sigma,\psi)$.
    To this end, note that $\LPVA_0=[\LFTA]_{11}$ and $\tilde{\LPVS}_{\Sigma,A}(\nu)=[\LFTA]_{1\phi(\nu)1}$ by Lemma \ref{FALPV2LFT:lemma2.0}.  By
    Lemma \ref{FALPV2LFT:lemma4},
    for all $l \in \mathbb{I}_1^{\NPSI}$,
    \( \LPVA_l=\sum_{j=1}^{\NPSI+1}  \lambda_i^l \LPVA(\mathbf{p}_i) - \textcolor{red}{\LPVA_0}=
        \sum_{j=1}^{\NPSI+1} \sum_{\nu \in (\mathbb{I}_1^{\NP})^{*}} \lambda_i^l \tilde{\LPVS}_{\Sigma,A}(\nu) \LPVP(\mathbf{p}_i,\nu)
     \)
     where $\LPVP(\mathbf{p}_i,\nu)$ is a suitable monomial in $\mathbf{p}_i$ determined by $\nu$.
     Notice that $\tilde{\LPVS}_{\Sigma,A}(\nu)=[\LFTA]_{1\phi(\nu)1}]$.
     Hence, the statement for $\LPVA_i$ follows. 
   The proof for $\LPVB_i$ and $\LPVC_i$ is similar.
   Since $\LPVA_{v}$ is a product of $\LPVA_{l}$, $l\in \mathbb{I}_0^{\NPSI}$, it then follows that
   $\LPVA_{v}$ is the limit of linear combinations of
   products of $\{[\LFTA]_{1\phi(\nu)1}\}_{\nu \in (\mathbb{I}_1^{\NP})^{*}}$ of length $|v|$. 
   The latter products corresponds to elements of $\{ [\LFTA]_{1w1} \}\}_{w \in (\mathbb{I}_1^{\NP+1})^{*}}$.
   Since $\LPVB_{j}$ is a limit of suitable linear combinations of 
   $\{[\LFTA  \# \LFTB]_{\phi(\nu)1}\}_{\nu \in (\mathbb{I}_1^{\NP})^{*}}$, hence,
   $\LPVA_v\LPVB_j$ is the limit of linear combinations of products of elements of
   $\{[\LFTA \# \LFTB]_{w1}\}_{w \in (\mathbb{I}_1^{\NP+1})^{*}}$ and 
    $\{[\LFTA]_{1\phi(\nu)1}\}_{\nu \in (\mathbb{I}_1^{\NP})^{*}}$. The latter products are elements of
    $\{ [\LFTA \# \LFTB]_{w1}\}_{w \in (\mathbb{I}_1^{\NP+1})^{*}}$.
    The proofs for the statements for  $\LPVC_i \LPVA_v$ and $\LPVC_i \LPVA_v \LPVB_j$ are similar  
  \end{proof}
 \begin{proof}[Proof of Theorem \ref{th:FALPVtoLFT}]
 \textbf{Part \ref{th:FALPVtoLFT:part0}}
  \textcolor{red}{For a solution  $(x,y,u,p)$ of $\Sigma$}, set 
  $\tilde{z}(t)\!\!=\!\!(\I\!-\!\mathcal{F}\Delta_{p(t)})^{-1}\mathcal{G}\begin{bmatrix} x^T(t) & u^T(t) \end{bmatrix}$.
  By Lemma \ref{lem:rational}, the inverse used in the definition of
  $\tilde{z}$ is well-defined. From \eqref{eq:SumMatricesPsi}, 
  if $w(t)=\Delta_{p(t)}\tilde{z}(t)$, then 
  $\mathcal{H}_x w(t)=\sum_{l=1}^{\NPSI} (A_lx(t)+B_lu(t))\psi_l(p(t))$ and 
  $\mathcal{H}_y w(t)=\sum_{l=1}^{\NPSI} (C_lx(t)+D_lu(t))\psi_l(p(t))$.
  Hence, 
  \textcolor{red}{\eqref{sol:LFT} holds}, i.e.,
  $(z=(x^T,\tilde{z}^T)^T,y,u)$ is a solution of 
  $\mathcal{M}(\Sigma)$ for $\Delta(p)$.

 \textbf{Part \ref{th:FALPVtoLFT:part1}}
   \textcolor{red}{Assume that $\hat{\Sigma}$ is of the form \eqref{equ:FALPVSystem} with 
  the matrices $\hat{\LPVA}_i, \hat{\LPVB}_i, \hat{\LPVC}_i, \hat{\LPVD}_i$ instead of $\LPVA_i, \LPVB_i, \LPVC_i, \LPVD_i$, respectively,
   and $\mathcal{M}(\hat{\Sigma},\psi)$
   is as in \eqref{psi_LFT} with $\hat{\NX}, \hat{\mathcal{F}},\hat{\mathcal{G}},\hat{\mathcal{H}}, \hat{\mathcal{D}}$
   instead of $\NX,\mathcal{F},\mathcal{G},\mathcal{H},\mathcal{D}$, respectively.}
    \textcolor{red}{If} $\Sigma$ and $\hat{\Sigma}$ are input-output equivalent, 
    \textcolor{red}{then $\LPVC_i \LPVA_v \LPVB_j=\hat{\LPVC}_i \hat{\LPVA}_v \hat{\LPVB}_j$, $\LPVD_i=\hat{\LPVD}_i$,}
    for all $v \in (\mathbb{I}_0^{\NPSI})^{*}$, 
    $i,j \in \mathbb{I}_0^{\NPSI}$. 
    \textcolor{red}{We show that}
    $Y_{\mathcal{M}(\Sigma)}=Y_{\mathcal{M}(\hat{\Sigma})}$, \textcolor{red}{i.e.,}
    for all  $w \in (\mathbb{I}_1^{\NP+1})^{*}$, 
    $[\LFTC \# \LFTA \# \LFTB]_w=[\hat{\LFTC} \# \hat{\LFTA} \# \hat{\LFTB}]_w$. 
    \textcolor{red}{If} $w$ contains at least one symbol $1$, then by Lemma \ref{FALPV2LFT:lemma3},
    $[\LFTC \# \LFTA \# \LFTB]_w$ and $[\hat{\LFTC} \# \hat{\LFTA} \# \hat{\LFTB}]_w$ are 
    linear combinations of $\{\LPVC_i \LPVA_v \LPVB_j\}_{v \in (\mathbb{I}_0^{\NPSI})^{*}}$ 
    and $\{\hat{\LPVC}_i \hat{\LPVA}_v \hat{\LPVB}_j\}_{v \in (\mathbb{I}_0^{\NPSI})^{*},i,j \in \mathbb{I}_0^{\NP}}$, respectively,
    the coefficients corresponding of $\LPVC_i \LPVA_v \LPVB_j$ and $\hat{\LPVC}_i \hat{\LPVA}_v \hat{\LPVB}_j$
    in those linear combinations are the same, as they do not depend on $\Sigma$. 
    Hence, as  $\LPVC_i \LPVA_v \LPVB_j=\hat{\LPVC}_i \hat{\LPVA}_v \hat{\LPVB}_j$, the linear combinations are the same
    and hence $[\LFTC \# \LFTA \# \LFTB]_w=[\hat{\LFTC} \# \hat{\LFTA} \# \hat{\LFTB}]_w$.
    If $w$ contains no $1$, then by Lemma \ref{FALPV2LFT:lemma3}, 
    $[\LFTC \# \LFTA \# \LFTB]_w$ and $[\hat{\LFTC} \# \hat{\LFTA} \# \hat{\LFTB}]_w$ are equal to 
    $[\mathcal{H}_y \# \mathcal{F} \# \mathcal{G}_u]_{\phi^{-1}(w)}=\sum_{l=1}^{\NPSI} \LPVD_l \LPVS^{l}(\phi^{-1}(w))$ and 
    $[\hat{\mathcal{H}}_y \# \hat{\mathcal{F}} \# \hat{\mathcal{G}}_u]_{\phi^{-1}(w)}=\sum_{l=1}^{\NPSI} \hat{\LPVD_l} \LPVS^{l}(\phi^{-1}(w)$
    and since $\hat{D}_l=D_l$ for all $l \in \mathbb{I}_1^{\NPSI}$, the last two quantities are equal.

   Conversely, assume that $\mathcal{M}(\Sigma)$ and $\mathcal{M}(\hat{\Sigma})$ are formally input-output equivalent. 
   Then 
    $[\LFTC \# \LFTA \# \LFTB]_w=[\hat{\LFTC} \# \hat{\LFTA} \# \hat{\LFTB}]_w$ for all $w \in (\mathbb{I}_1^{\NP+1})^{*}$, $|w| > 0$.
    and $\LPVD_0=\LFTD=\hat{\LFTD}=\hat{\LPVD}_0$.
    
   By Lemma \ref{FALPV2LFT:lemma5}, 
   for any $v \in (\mathbb{I}_0^{\NPSI})^{*}$, $i,j \in \mathbb{I}_0^{\NPSI}$, 
    $\LPVC_i \LPVA_v\LPVB_j$ and $\hat{\LPVC}_i \hat{\LPVA}_v\hat{\LPVB}_j$
    are both limits of linear combinations of  
    $\{[\LFTC \# \LFTA \# \LFTB]_{1w1} \}_{w \in (\mathbb{I}_1^{\NP+1})^{*}}$
    and $\{[\hat{\LFTC} \# \hat{\LFTA} \# \hat{\LFTB}]_{1w1}\}_{w \in (\mathbb{I}_1^{\NP+1})^{*}}$
    and the coefficients of these linear combinations  are the same. Hence, 
    as  $[\LFTC \# \LFTA \# \LFTB]_w=[\hat{\LFTC} \# \hat{\LFTA} \# \hat{\LFTB}]_w$,  $w \in (\mathbb{I}_1^{\NP+1})^{*}$, 
    it follows that the linear combinations and thus their limits are the same. 
    Hence, $\LPVC_i \LPVA_v\LPVB_j=\hat{\LPVC}_i \hat{\LPVA}_v\hat{\LPVB}_j$ for all $v \in (\mathbb{I}_0^{\NPSI})^{*}$, $i,j \in \mathbb{I}_0^{\NPSI}$,
    i.e. $\Sigma$ and $\hat{\Sigma}$ are input-output equivalent.

   \textbf{Part \ref{th:FALPVtoLFT:part2}}
   Assume $\Sigma$ is minimal and $\mathcal{M}(\Sigma,\psi)$ is minimal. 
   \textcolor{red}{We show that $\mathcal{M}(\Sigma)$ is reachable.
   Observability can be  shown by duality.}
   Assume that $\mathcal{M}(\Sigma)$ is not reachable. Then there exists a vector $v \ne 0$ and  $i \in \mathbb{I}_0^{\NP}$
   such that $v^T[\LFTA \# \LFTB]_{wi}=0$ for all $w \in (\mathbb{I}_1^{\NP+1})^{*}$.
   Assume that $i \ne 1$ and 
   note that $[\LFTA \# \LFTB]_{wi}=[\mathcal{F} \# \mathcal{G}_x]_{\phi^{-1}(wi)}$ if $wi$ does not contain $1$.
   Then for any $i \in \mathbb{I}_1^{\NP}$, $v_i^T[\mathcal{F} \# \mathcal{G}_x]_{(i-1)s}=0$ for all $s \in (\mathbb{I}_1^{\NP})^{*}$.
   Hence, by reachability of $\mathcal{M}(\Sigma,\psi)$, $v_i=0$, $i \in \mathbb{I}_1^{\NP}$,
   which is a contradiction.
   If $i=1$, then $v^T[\LFTA \# \LFTB]_{w1}=0$ for all $w \in (\mathbb{I}_1^{\NP+1})^{*}$. 
   Since $\LPVA_s \LPVB_j$, $s \in (\mathbb{I}_1^{\NPSI})^{*}$ is a limit of linear combinations of
   $\{[\LFTA \# \LFTB]_{w1}\}_{w \in (\mathbb{I}_1^{\NP+1})^{*}}$, then $v^T \LPVA_s \LPVB_j=0$ for all $j \in \mathbb{I}_0^{\NPSI}$,
   and hence by reachability of $\Sigma$, $v=0$, which is a contradiction.     
 \end{proof} 
\begin{proof}[Proof of Lemma \ref{rem:comp1:lem0}]
As it was mentioned in  proof of Lemma \ref{lem:rational},
for any $w \in (\mathbb{I}_1^{\NP})^{*}$, $i \in (\mathbb{I}_1^{\NP})$
\[ [\widetilde{\mathcal{F}} \# \widetilde{\mathcal{G}} ]_{wi}=[F \# G]_{wi} \otimes \I_{\NY+\NU} \]
We will show that $\{[\widetilde{\mathcal{F}} \# \widetilde{\mathcal{G}} ]_{wi}\}_{w  \in (\mathbb{I}_1^{\NP})^{*}}$ spans $\mathbb{R}^{n_i'}$.
Then the claim of Lemma \ref{rem:comp1:lem0} follows from \cite[Theorem 20]{Beck1999}. 
To this end, we show that for every $v \in \mathbb{R}^{n_i'}$, if $\forall w  \in (\mathbb{I}_1^{\NP})^{*}: v^T  [\widetilde{\mathcal{F}} \# \widetilde{\mathcal{G}}]_{wi}=0$,
then $v=0$. Note that by \cite[Section 12.3.4]{golub2013matrix}, there exists a matrix $X$ such that $v=\mathrm{vect}(X)$, where 
$\mathrm{vect}$ is the vectorization operator defined in  \cite[Section 12.3.4]{golub2013matrix}, and
\cite[Section 12.3.4, equation 12.3.11, Section 12.3.1]{golub2013matrix},
$(v^T [\widetilde{\mathcal{F}} \# \widetilde{\mathcal{G}}]_{wi})^T=(\I_{\NY+\NU} \otimes [F \# G]_{wi}^T)\mathrm{vect}(X)=[F \# G]^T_{wi}X=0$.
In other words, $0=([F \# G]^T_{wi}X)^T=X[F \# G]_{wi}$. By minimality of $M$ it follows that
the columns of $\{[F \# G]_{wi}\}_{w  \in (\mathbb{I}_1^{\NP})^{*}}$ span $\mathbb{R}^{n_i}$ and hence $X=0$ and thus $v=0$.
\end{proof}
\begin{proof}[Proof of Lemma \ref{rem:comp1:lem}]
	By Lemma \ref{lem:rational} $\widetilde{\mathcal{M}}(\Sigma,\psi)$ satisfies the reachability rank condition of \cite[Theorem 20]{Beck1999}.
	In order to show minimality, it is sufficient to show that  $\widetilde{\mathcal{M}}(\Sigma,\psi)$ satisfies the observability rank condition of \cite[Theorem 20]{Beck1999}
	To this end, we show that if  $v \in \mathbb{R}^{n_i'}$ is such that 
	$\forall w  (\mathbb{I}_1^{\NP})^{*}:[\widetilde{\mathcal{H}} \# \widetilde{\mathcal{F}}]_{iw}v=0$, then $v=0$. 
	Let  $v \in \mathbb{R}^{n_i'}$ is such that 
	$\forall w  \in (\mathbb{I}_1^{\NP})^{*}:[\widetilde{\mathcal{H}} \# \widetilde{\mathcal{F}}]_{iw}v=0$.
         Note that by \cite[Section 12.3.4]{golub2013matrix}, there exists a matrix $X$ such that $v=\mathrm{vect}(X)$, where 
$\mathrm{vect}$ is the vectorization operator defined in  \cite[Section 12.3.4]{golub2013matrix}.
	As it was mentioned in  proof of Lemma \ref{lem:rational},
	\[ [\widetilde{\mathcal{H}} \# \widetilde{\mathcal{F}}]_{i w}=[H \# F]_{i w} \otimes \begin{bmatrix} \LPVA_1 & \LPVB_1 \\ \LPVC_1 & \LPVD_1 \end{bmatrix} \]
Hence, \cite[Section 12.3.4, equation 12.3.11, Section 12.3.1]{golub2013matrix},
	\begin{align*}
		& [\widetilde{\mathcal{H}} \# \widetilde{\mathcal{F}}]_{iw}v=([H \# F]_{iw} \otimes \begin{bmatrix} \LPVA_1 & \LPVB_1 \\ \LPVC_1 & \LPVD_1 \end{bmatrix}) \mathrm{vect}(X) \\
			&   =\begin{bmatrix} \LPVA_1 & \LPVB_1 \\ \LPVC_1 & \LPVD_1 \end{bmatrix} X ([H \# F]_{iw})^T = 0
	\end{align*}
	If  $\begin{bmatrix} \LPVA_1 & \LPVB_1 \\ \LPVC_1 & \LPVD_1 \end{bmatrix}$ is full column rank, then it follows that  $X ([H \# F]_{iw})^T=0$, i.e.,
		$[H \# F]_{iw} X^T=0$. Since the latter holds for all  $w  \in  (\mathbb{I}_1^{\NP})^{*}$, by minimality of $M$ and the observability rank condition of 
        \cite[Theorem 20]{Beck1999}, $X^T=0$, which implies $v=0$.
\end{proof}
\begin{proof}[Proof of Lemma \ref{rem:comp1:lem1}]
	Similary to the proof of Lemma \ref{lem:rational},  using the properties of the Kronecker product  \cite[Section 12.3.1]{golub2013matrix}, 
	it follows that for any  $w \in (\mathbb{I}_1^{\NP})^{*}$, $i,j \in (\mathbb{I}_1^{\NP})$
	\begin{align*}
		& [\mathcal{H} \# \mathcal{F}]_{iw}=[H \# F]_{iw} \otimes L, \quad 
		[\mathcal{F} \# \mathcal{G}]_{wj}=[F \# G]_{wj} \otimes R, \\
		&[\mathcal{H} \# \mathcal{F} \# \mathcal{G}]_{iwj}=[H \# F \# G]_{iwj} \otimes RL=\\
		& \mathcal{S}(iwj) \begin{bmatrix} \LPVA_1 & \LPVB_1 \\ \LPVC_1 & \LPVD_1 \end{bmatrix}
			=\tilde{\mathcal{S}}_{\Sigma}(iwj)
	\end{align*}
	That is, $\mathcal{M}(\Sigma,\psi)$ is formaly input-output equivalent with $\widetilde{\mathcal{M}}(\Sigma,\psi)$.
	Moreover, in the same manner as in the proof of Lemma \ref{rem:comp1:lem0}-\ref{rem:comp1:lem}, it can be shown that
	$\mathcal{M}(\Sigma,\psi)$ satisfies the controllability and observability rank conditions of \cite[Theorem 20]{Beck1999}. 

	Indeed, in order to show that the controllability rank condition  of \cite[Theorem 20]{Beck1999} holds, we show that
	 $v \in \mathbb{R}^{n_i'm}$, if $\forall w  \in (\mathbb{I}_1^{\NP})^{*}: v^T  [\mathcal{F} \# \mathcal{G}]_{wi}=0$,
then $v=0$. Note that by \cite[Section 12.3.4]{golub2013matrix}, there exists a matrix $X$ such that $v=\mathrm{vect}(X)$, where 
$\mathrm{vect}$ is the vectorization operator defined in  \cite[Section 12.3.4]{golub2013matrix}, and
\cite[Section 12.3.4, equation 12.3.11, Section 12.3.1]{golub2013matrix},
	$(v^T [\mathcal{F} \# \mathcal{G}]_{wi})^T=(R^T \otimes [F \# G]_{wi}^T)\mathrm{vect}(X)=[F \# G]^T_{wi}XR=0$.
	Since $R$ is full row rank, $[F \# G]^T_{wi}XR=0$ implies $[F \# G]^T_{wi}X=0$.
In other words, $0=([F \# G]^T_{wi}X)^T=X[F \# G]_{wi}$. By minimality of $M$ it follows that
the columns of $\{[F \# G]_{wi}\}_{w  \in (\mathbb{I}_1^{\NP})^{*}}$ span $\mathbb{R}^{n_i}$ and hence $X=0$ and thus $v=0$.

 Similarly, in order to that the observability rank condition  of \cite[Theorem 20]{Beck1999} holds, 
 we show that if $v \in \mathbb{R}^{n_i m'}$ is such that 
	$\forall w  (\mathbb{I}_1^{\NP})^{*}:[\mathcal{H} \# \mathcal{F}]_{iw}v=0$, then $v=0$. 
	Let  $v \in \mathbb{R}^{n_i'm }$ is such that 
	$\forall w  \in (\mathbb{I}_1^{\NP})^{*}:[\mathcal{H} \# \mathcal{F}]_{iw}v=0$.
         Note that by \cite[Section 12.3.4]{golub2013matrix}, there exists a matrix $X$ such that $v=\mathrm{vect}(X)$, where 
$\mathrm{vect}$ is the vectorization operator defined in  \cite[Section 12.3.4]{golub2013matrix}.
	As it was mentioned in  proof of Lemma \ref{lem:rational},
	\[ [\mathcal{H} \# \mathcal{F}]_{i w}=[H \# F]_{i w} \otimes L \]
Hence, \cite[Section 12.3.4, equation 12.3.11, Section 12.3.1]{golub2013matrix},
	\begin{align*}
		& [\mathcal{H} \# \mathcal{F}]_{iw}v=([H \# F]_{iw} \otimes L) \mathrm{vect}(X) = \\
			&   L X ([H \# F]_{iw})^T = 0
	\end{align*}
	As $L$ is full column rank, then it follows that  $X ([H \# F]_{iw})^T=0$, i.e.,
		$[H \# F]_{iw} X^T=0$. Since the latter holds for all  $w  \in  (\mathbb{I}_1^{\NP})^{*}$, by minimality of $M$ and the observability rank condition of 
        \cite[Theorem 20]{Beck1999}, $X^T=0$, which implies $v=0$.

\end{proof}

\color{black}

\

\section{Conclusions}\label{sec:conclusion}
We presented a transformation of  LPV models with functional affine dependence 
on the scheduling into LFTs with a linear dependence on the scheduling, which preserves both input-output equivalence and minimality. 
Future work will focus on the algorithmic aspects of this transformation and its applications in control and system identification.
\bibliographystyle{IEEEtran}
\end{document}